\newtheorem{theorem}{Theorem}[section]
\newtheorem{lemma}[theorem]{Lemma}     
\newtheorem{corollary}[theorem]{Corollary}
\newtheorem{proposition}[theorem]{Proposition}
\theoremstyle{definition}
\newtheorem{definition}[theorem]{Definition}
\newtheorem{reminder}[theorem]{Reminder}
\newtheorem{remark}[theorem]{Remark}
\newtheorem{example}[theorem]{Example}
\newtheorem{notation}[theorem]{Notation} 
\newtheorem{finalcomments}[theorem]{Final comments}
\newtheorem*{notationsandconventions}{Notations and conventions}
\newcommand{\espai}{\phantom{++}}
\newcommand{\mbn}{\mbox{$\mathbb{N}$}}
\newcommand{\mbz}{\mbox{$\mathbb{Z}$}}
\newcommand{\mbq}{\mbox{$\mathbb{Q}$}}
\newcommand{\mbf}{\mbox{$\mathbb{F}$}}
\newcommand{\mcc}{\mbox{$\mathcal{C}$}}
\newcommand{\mro}{\mbox{$\mathrm{o}$}}
\newcommand{\mfm}{\mbox{$\mathfrak{m}$}}
\newcommand{\mfn}{\mbox{$\mathfrak{n}$}}
\newcommand{\mfp}{\mbox{$\mathfrak{p}$}}
\newcommand{\mfq}{\mbox{$\mathfrak{q}$}}
\newcommand{\id}{\mbox{${\rm id}$}}
\newcommand{\ud}{\mbox{${\rm d}$}}
\newcommand{\mdp}{\mbox{${\rm m}$}}
\newcommand{\charac}{\mbox{${\rm char}$}}
\newcommand{\height}{\mbox{${\rm ht}$}}
\newcommand{\rank}{\mbox{${\rm rank}$}}
\newcommand{\Spec}{\mbox{${\rm Spec}$}}
\newcommand{\Max}{\mbox{${\rm Max}$}}
\title[Integral degree of integral ring extensions]
{On the integral degree of integral ring extensions}
\author{Jos\'e M. Giral, Liam O'Carroll, Francesc Planas-Vilanova,
  and Bernat Plans}
\date{\today}
\subjclass[2010]{13B21,13B22,13G05,12F05}
\begin{document}

\begin{abstract}
Let $A\subset B$ be an integral ring extension of integral domains
with fields of fractions $K$ and $L$, respectively.  The integral
degree of $A\subset B$, denoted by $\ud_A(B)$, is defined as the
supremum of the degrees of minimal integral equations of elements of
$B$ over $A$. It is an invariant that lies in between $\ud_K(L)$ and
$\mu_A(B)$, the minimal number of generators of the $A$-module
$B$. Our purpose is to study this invariant. We prove that it is
sub-multiplicative and upper-semicontinuous in the following three
cases: if $A\subset B$ is simple; if $A\subset B$ is projective and
finite and $K\subset L$ is a simple algebraic field extension; or if
$A$ is integrally closed. Furthermore, $\ud$ is upper-semicontinuous
if $A$ is noetherian of dimension $1$ and with finite integral
closure. In general, however, $\ud$ is neither sub-multiplicative nor
upper-semicontinuous.
\end{abstract}

\maketitle 

\section{Introduction}

Let $A\subset B$ be an integral ring extension, where $A$ and $B$ are
two commutative integral domains with fields of fractions $K=Q(A)$ and
$L=Q(B)$, respectively. Then, for any element $b\in B$, there exist
$n\geq 1$ and $a_{i}\in A$, such that
\begin{eqnarray*}
b^{n}+a_{1}b^{n-1}+a_{2}b^{n-2}+\ldots +a_{n-1}b+a_{n}=0.
\end{eqnarray*}
The minimum integer $n\geq 1$ satisfying such an equation is called
the {\em integral degree of $b$ over $A$} and is denoted by
$\id_A(b)$. The supremum, possibly infinite, of all the integral
degrees of elements of $B$ over $A$, $\sup\{\id_A(b)\mid b\in B\}$, is
called the {\em integral degree of $B$ over $A$} and is denoted by
$\ud_A(B)$.

These notions are indeed very natural. They were explicitely
considered in \cite{gp} and, previously, in a different framework, by
Kurosch \cite{kurosch}, Jacobson \cite{jacobson-annals}, Kaplansky
\cite{kaplansky} and Levitzki \cite{levitzki}, and more recently by
Voight \cite{voight}.

The goal in \cite{gp} was to study the uniform Artin-Rees property
with respect to the set of regular ideals having a principal
reduction. It was proved that the integral degree, in fact, provides a
uniform Artin-Rees number for such a set of ideals.

The purpose of the present paper is to investigate more deeply the
invariant $\ud_A(B)$. We first note that $\ud_A(B)$ is between
$\ud_K(L)$, the integral degree of the corresponding algebraic field
extension $K\subset L$, and $\mu_A(B)$, the minimal number of
generators of the $A$-module $B$. That is,
\begin{eqnarray*}
\ud_K(L)\leq \ud_A(B)\leq\mu_A(B).
\end{eqnarray*} 
In a sense, $\ud_A(B)$ can play the role of, or just substitute for,
one of them. For instance, it is a central question in commutative
ring theory whether the integral closure $\overline{A}$ of a domain
$A$ is a finitely generated $A$-module. It is well-known that, even
for one-dimensional noetherian local domains, $\mu_A(\overline{A})$
might be infinite (see, e.g., \cite[Section~4.9]{hs},
\cite[\S~33]{matsumura-crt}). However, for one-dimensional noetherian
local domains $\ud_A(\overline{A})$ is finite
(\cite[Proposition~6.5]{gp}). Hence, in this situation, $\ud_A(B)$
would be an appropriate substitute for $\mu_A(B)$. Another positive
aspect of $\ud_A(B)$, compared with $\mu_A(B)$, is good behaviour with
respect to inclusion, i.e., if $B_1\subset B_2$, then $\ud_A(B_1)\leq
\ud_A(B_2)$, while in general we cannot deduce that $\mu_A(B_1)$ is
smaller than or equal to $\mu_A(B_2)$.

Similarly, $\ud_K(L)$ is a simplification of $\ud_A(B)$. Note that
$\ud_K(L)\leq [L:K]$, the degree of the algebraic field extension
$K\subset L$. We will see that $\ud_K(L)=[L:K]$ if and only if
$K\subset L$ is a simple algebraic field extension.

Of special interest would be to completely characterise when
$\ud_A(B)$ reaches its maximal or its minimal value. We will say that
$A\subset B$ has {\em maximal integral degree} when
$\ud_A(B)=\mu_A(B)$. Similarly, we will say that $A\subset B$ has {\em
  minimal integral degree} when $\ud_K(L)=\ud_A(B)$. Examples of
maximal integral degree are simple integral extensions $A\subset
B=A[b]$, $b\in B$ (Proposition~\ref{first-prop},~$(b)$). Examples of
minimal integral degree occur when $A\subset B$ is a projective finite
integral ring extension with corresponding simple algebraic field
extension $K\subset L$ or when $A$ is integrally closed
(cf. Theorem~\ref{proj-klsimple} and Proposition~\ref{kronecker}). By
a projective, respectively free, finite ring extension $A\subset B$ we
mean that $B$ is a finitely generated projective, respectively free,
$A$-module. Moreover, integral ring extensions $A\subset B$ of both at
the same time minimal and maximal integral degree are precisely free
finite integral ring extensions $A\subset B$ with corresponding simple
algebraic field extension $K\subset L$ (see
Corollary~\ref{free-klsimple}).

Considering the multiplicativity property of the degree of algebraic
field extensions $K\subset L\subset M$, that is, $[M:K]=[L:K][M:L]$,
and the sub-multiplicativity property of the minimal number of
generators of integral ring extensions $A\subset B\subset C$, namely,
$\mu_A(C)\leq \mu_A(B)\mu_B(C)$, it is natural to ask for the same
property of $\ud_A(B)$.  We will say that the integral degree $\ud$ is
{\em sub-multiplicative with respect to} $A\subset B$ if $\ud_A(C)\leq
\ud_A(B)\ud_B(C)$, for every integral ring extension $B\subset C$. We
prove that $\ud$ is sub-multiplicative with respect to $A\subset B$ in
the following three situations: if $A\subset B$ has maximal integral
degree (e.g., if $A\subset B$ is simple); if $A\subset B$ is
projective and finite and $K\subset L$ is simple; or if $A$ is
integrally closed (see Corollaries~\ref{sm-mid}, ~\ref{sm-projective}
and~\ref{sm-icd}). Note that in the three cases above, $A\subset B$
has either maximal integral degree, or else minimal integral
degree. We do not know an instance in which $\ud$ is
sub-multiplicative with respect to $A\subset B$ and
$\ud_K(L)<\ud_A(B)<\mu_A(B)$. We will prove that $\ud$ is not
sub-multiplicative in general. Taking advantage of an example of
Dede\-kind, we find a non-integrally closed noetherian domain $A$ of
dimension $1$, with finite integral closure $B$, where $B$ is the ring
of integers of a number field, and a degree-two integral extension $C$
of $B$, such that $\ud_A(C)=6$, whereas $\ud_A(B)=2$ and $\ud_B(C)=2$.
In this particular example, $\ud_K(L)=1$, $\ud_A(B)=2$ and
$\mu_A(B)=3$, so $A\subset B$ is neither of maximal nor of minimal
integral degree (see Example~\ref{no-sm}).

Another aspect well worth considering is semicontinuity, taking into
account that the minimal number of generators is an
upper-semicontinuous function (see, e.g., \cite[Chapter~IV, \S~2,
  Corollary~2.6]{kunz}). Note that if $\mfp$ is a prime ideal of $A$,
clearly $A_{\mathfrak{p}}\subset B_{\mathfrak{p}}$ is integral. Thus
one can regard the integral degree as a function
$\ud:\Spec(A)\to\mbn$, defined by
$\ud(\mfp)=\ud_{A_{\mathfrak{p}}}(B_{\mathfrak{p}})$. We will say that
the integral degree $\ud$ is {\em upper-semicontinuous with respect
  to} $A\subset B$ if $\ud:\Spec(A)\to\mbn$ is an upper-semicontinuous
function, that is, if $\{\mfp\in\Spec(A)\mid \ud(\mfp)<n\}$ is open,
for all $n\geq 1$. We prove (in Proposition~\ref{semi-two}) that $\ud$
is upper-semicontinuous with respect to $A\subset B$ in the following
two situations: if $A\subset B$ is simple; or if $A\subset B$ has
minimal integral degree (e.g., if $A\subset B$ is projective and
finite and $K\subset L$ is simple; or if $A$ is integrally closed).
Note that in the two cases above, $A\subset B$ has either maximal
integral degree, or else minimal integral degree. There is a setting
in which we can prove that $\ud$ is upper-semicontinuous with respect
to $A\subset B$, yet $\ud_A(B)$ might be different from $\ud_K(L)$ and
$\mu_A(B)$. This happens when $A$ is a non-integrally closed
noetherian domain of dimension $1$ with finite integral closure (see
Theorem~\ref{semi-nagata}). However, $\ud$ is not upper-semicontinuous
in general, even if $A$ is a noetherian domain of dimension~$1$ (see
Example~\ref{no-semi}).

The paper is organized as follows. In Section~\ref{prelim} we
recall some definitions and known results given in \cite{gp}. We also
prove that $\ud_A(B)$ is a local invariant in the following sense:
\begin{eqnarray*}
\ud_A(B)=\sup\{\ud_{A_{\mathfrak{m}}}(B_{\mathfrak{m}})\mid
\mfm\in\Max(A)\}=\sup\{\ud_{A_{\mathfrak{p}}}(B_{\mathfrak{p}})\mid
\mfp\in\Spec(A)\}. 
\end{eqnarray*}
Observe that the analogue for $\mu_A(B)$ is not true in
general. Section~\ref{sm} is mainly devoted to the
sub-multiplicativity of the integral degree. Sections~\ref{field},
~\ref{proj} and ~\ref{icd} are devoted to the integral degree of,
respectively, algebraic field extensions, projective finite ring
extensions and integral ring extensions with base ring integrally
closed. Finally, Section~\ref{semi} is devoted to the
upper-semicontinuity of the integral degree.

\begin{notationsandconventions}
All rings are assumed to be commutative and with unity.  Throughout,
$A\subset B$ and $B\subset C$ are integral ring extensions. Moreover,
we always assume that $A$, $B$ and $C$ are integral domains, though
many definitions and results can easily be extended to the
non-integral domain case. The fields of fractions of $A$, $B$ and $C$
are denoted by $K=Q(A)$, $L=Q(B)$ and $M=Q(C)$, respectively. The
integral closure of $A$ in $K=Q(A)$ is denoted by $\overline{A}$ and
is simply called the ``integral closure of $A$''. In the particular
case in which $A$, $B$ and $C$ are fields, we write $A=K$, $B=L$ and
$C=M$. Whenever $\{x_1,\ldots ,x_r\}\subset N$ is a generating set of
an $A$-module $N$, we will write $N=\langle x_1,\ldots
,x_r\rangle_A$. The minimal number of generators of $N$ as an
$A$-module, understood as the minimum of the cardinalities of
generating sets of $N$, is denoted by $\mu_A(N)$.
\end{notationsandconventions}

\vspace*{0,7cm}

\noindent {\em Liam O'Carroll, our friend and coauthor, died aged 72
  years, of illness, on October 25, 2017. We greatly miss him.}

\vspace*{0,7cm}

\section{Preliminaries and first properties}\label{prelim}

We start by recalling and extending some definitions and results from
\cite[Section~6]{gp}. Recall that $A\subset B$ is an integral ring
extension of integral domains, and $K=Q(A)$ and $L=Q(B)$ are their fields
of fractions.

\begin{definition}\label{def-int-degree}
Let $b\in B$. A {\em minimal degree polynomial} of $b$ over $A$ (which
is not necessarily unique) is a monic polynomial
$\mdp(T)=T^n+a_1T^{n-1}+\ldots +a_{n-1}T+a_n\in A[T]$, $n\geq 1$, with
$\mdp(b)=0$, and such that there is no other monic polynomial of lower
degree in $A[T]$ and vanishing at $b$.  The {\em integral degree of
  $b$ over $A$}, denoted by $\id_A(b)$, is the degree of a minimal
degree polynomial $\mdp(T)$ of $b$ over $A$. In other words,
\begin{eqnarray*}
\id_{A}(b)=\deg \mdp(T)= \min\{ n\geq 1\mid b\mbox{ satisfies an
  integral equation over $A$ of degree }n\}.
\end{eqnarray*}
The {\em integral degree of $B$ over $A$} is defined as the value
(possibly infinite)
\begin{eqnarray*}
\ud_{A}(B)={\rm sup}\, \{ \id_A(b)\mid b\in B\}.
\end{eqnarray*}
Note that $\ud_{A}(B)=1$ if and only if $A=B$. 
\end{definition}

We give a first example, which will be completed subsequently (see
Corollary~\ref{galois}).

\begin{example}\label{ring-of-invariants}
Let $B$ be an integral domain and let $G$ be a finite group acting as
automorphisms on $B$. Let $A=B^G=\{b\in B\mid \sigma(b)=b,\mbox{ for
  all }\sigma\in G\}$. Then $A\subset B$ is an integral ring extension
and $\ud_A(B)\leq \mro(G)$, the order of $G$.
\end{example}
\begin{proof}
Let $G=\{\sigma_1,\ldots ,\sigma_n\}$. For every $b\in B$, take
$p(T)=(T-\sigma_1(b))\cdots (T-\sigma_n(b))$. Clearly $p(T)\in A[T]$
and $p(b)=0$. Thus $b$ is integral over $A$ and $\id_A(b)\leq
n=\mro(G)$.
\end{proof}

The following is a first list of properties of the integral degree
mainly proved in \cite{gp}.

\begin{proposition}\label{first-prop}
Let $A\subset B$ be an integral ring extension. The following
properties hold.
\begin{itemize}
\item[$(a)$] $\ud_A(B)\leq \mu_A(B)$.
\item[$(b)$] If $A\subset B=A[b]$ is simple, then
  $\id_A(b)=\ud_A(B)=\mu_A(B)$.
\item[$(c)$] If $S$ is a multiplicatively closed subset of $A$, then
  $S^{-1}A\subset S^{-1}B$ is an integral ring extension and
  $\ud_{S^{-1}A}(S^{-1}B)\leq \ud_{A}(B)$.
\item[$(d)$] If $S=A\setminus \{0\}$, then $S^{-1}B=L$.
\item[$(e)$] $\ud_K(L)\leq\ud_{A_{\mathfrak{p}}}(B_{\mathfrak{p}})\leq
  \ud_{A}(B)$, for every $\mfp\in\Spec(A)$.
\item[$(f)$] $\ud_K(L)\leq [L:K]$.
\end{itemize}
\end{proposition}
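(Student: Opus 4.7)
The plan is to dispatch the six items one by one, since each is short and uses a standard technique; none should present a serious obstacle, but I need to be careful about the interaction between localization and fields of fractions in parts (d) and (e).

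For (a), I would use the classical determinant trick. If $\mu_A(B)=n$ is finite and $e_1,\ldots,e_n$ generate $B$ as an $A$-module, then for any $b\in B$ I can write $b\, e_i=\sum_j a_{ij} e_j$ with $a_{ij}\in A$, and Cayley--Hamilton applied to the matrix $(a_{ij})$ shows that $b$ annihilates its characteristic polynomial, so $\id_A(b)\le n$. Taking the supremum over $b$ gives $\ud_A(B)\le \mu_A(B)$ (and the case $\mu_A(B)=\infty$ is trivial). For (b), let $n=\id_A(b)$ and write $b^n=-a_1b^{n-1}-\cdots-a_n$; by induction every power $b^m$ lies in $A+Ab+\cdots+Ab^{n-1}$, so $\{1,b,\ldots,b^{n-1}\}$ generates $A[b]=B$ as an $A$-module, giving $\mu_A(B)\le n$. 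Combined with (a) and the obvious inequality $\id_A(b)\le\ud_A(B)$, we obtain the chain $\mu_A(B)\le\id_A(b)\le\ud_A(B)\le\mu_A(B)$, forcing equality.

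For (c), integrality is preserved by localization (standard). For $b/s\in S^{-1}B$, pick a minimal integral equation $b^n+a_1b^{n-1}+\cdots+a_n=0$ of $b$ over $A$ and divide by $s^n$ to produce an integral equation of $b/s$ over $S^{-1}A$ of the same degree $n$; hence $\id_{S^{-1}A}(b/s)\le\id_A(b)\le\ud_A(B)$. For (d), the inclusion $S^{-1}B\subset L$ is immediate since every $b/a$ with $a\in A\setminus\{0\}$ lies in $L$; for the reverse, any $b'\in B\setminus\{0\}$ satisfies an integral equation $b'^{n}+a_1b'^{n-1}+\cdots+a_n=0$ which I may assume has $a_n\neq 0$ (using minimality and the fact that $B$ is a domain), whence $1/b'=-a_n^{-1}(b'^{n-1}+a_1b'^{n-2}+\cdots+a_{n-1})\in S^{-1}B$, so $L\subset S^{-1}B$. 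Part (e) then follows by combining (c) and (d): the first inequality comes from applying (c) to the extension $A_{\mathfrak{p}}\subset B_{\mathfrak{p}}$ with multiplicative set $A_{\mathfrak{p}}\setminus\{0\}$, since $Q(A_{\mathfrak{p}})=K$ and by (d) the localisation of $B_{\mathfrak{p}}$ at this set is $L$; the second inequality is (c) with $S=A\setminus\mathfrak{p}$.

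Finally, for (f), I observe that $\id_K(b)$ is exactly the degree of the minimal polynomial of $b$ over the field $K$, and the minimal polynomial divides any annihilating monic polynomial. Since $K[b]=K(b)$ is an intermediate field of $K\subset L$ with $[K(b):K]=\id_K(b)$, the tower law gives $\id_K(b)\le[L:K]$, and taking the supremum yields $\ud_K(L)\le[L:K]$. The only step requiring any care is (d), and hence the field-of-fractions identification hidden in (e); everything else is either Cayley--Hamilton, a direct substitution, or elementary field theory.
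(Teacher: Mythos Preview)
Your proof is correct, and for parts (c) and (e) it matches the paper's argument essentially verbatim. The paper does not actually prove (a)--(c) but cites an earlier reference; your direct arguments (determinant trick for (a), the chain $\mu_A(B)\le\id_A(b)\le\ud_A(B)\le\mu_A(B)$ for (b), division by $s^n$ for (c)) are exactly the standard ones one would expect to find there. The only genuine, if minor, differences are in (d) and (f): for (d) the paper takes a dimension-theoretic shortcut---$S^{-1}B$ is integral over the field $K$, hence a zero-dimensional domain, hence itself a field contained in $L$---whereas you explicitly construct $1/b'$ from a minimal integral equation; for (f) the paper simply invokes (a) to get $\ud_K(L)\le\mu_K(L)=[L:K]$, while you use the tower law for $K\subset K(b)\subset L$. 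Both alternatives are equally elementary, so there is nothing to choose between them.
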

\begin{proof} $(a)$, $(b)$ and $(c)$ can be found in 
\cite[Corollary~6.3, Corollary~6.2 and Proposition~6.8]{gp}. For
$(d)$, since $K=S^{-1}A\subset S^{-1}B$ is an integral ring extension,
then $S^{-1}B$ is a zero-dimensional domain, hence a field, lying
inside $L=Q(B)$. Thus $S^{-1}B=L$. Let us prove $(e)$. Take
$\mfp\in\Spec(A)$, so $A\setminus \mfp\subseteq S$. Since $A\subset B$
is an integral ring extension, then $A_{\mathfrak{p}}\subset
B_{\mathfrak{p}}$ and
\begin{eqnarray*}
K=S^{-1}A=(A_{\mathfrak{p}}\setminus\{0\})^{-1}A_{\mathfrak{p}}\subset
(A_{\mathfrak{p}}\setminus\{0\})^{-1}B_{\mathfrak{p}}=S^{-1}B=L
\end{eqnarray*}
are integral ring extensions. Applying $(c)$ twice, we get
$(e)$. Finally, applying $(d)$ and $(a)$, one has
$\ud_K(L)=\ud_{S^{-1}A}(S^{-1}B))\leq
\mu_{S^{-1}A}(S^{-1}B)=\mu_K(L)=[L:K]$, which proves $(f)$.
\end{proof}

\begin{notation}\label{diagram}
The following picture can help in reading the paper.
\begin{eqnarray*}
\begin{matrix}
\ud_K(L) & \leq & \ud_A(B)\\ 
\mbox{\rule{0.45pt}{2.25mm}}\, \wedge && 
\mbox{\rule{0.45pt}{2.25mm}}\, \wedge \\
[L:K] & \leq & \mu_A(B).
\end{matrix}
\end{eqnarray*}
We say that $A\subset B$ has {\em minimal integral degree} when
$\ud_K(L)=\ud_A(B)$. Similarly, we say that $A\subset B$ has {\em
  maximal integral degree} when $\ud_A(B)=\mu_A(B)$.
\end{notation}

\begin{remark}\label{max-int-deg}
By Proposition~\ref{first-prop},~$(b)$, $A\subset B$ simple implies
$A\subset B$ has maximal integral degree. We will see that the
converse is true for finite field extensions (see
Proposition~\ref{field-separable-case}). However, in general,
$A\subset B$ of maximal integral degree does not imply $A\subset B$
simple. Take for instance $A=\mbz$ and $B$ the ring of integers of an
algebraic number field $L$, i.e., $B$ is the integral closure of
$A=\mbz$ in $L$, a finite field extension of the field of rational
numbers $K=\mbq$. Then $\ud_A(B)=\mu_A(B)$ (see
Corollary~\ref{dedekind}). Nevertheless, not every ring of integers
$B$ is a simple extension of $A=\mbz$. We will take advantage of this
fact in Example~\ref{no-sm}.
\end{remark}

Clearly, there are integral ring extensions of non-maximal integral
degree. This can already happen with affine domains, as shown in the
next example.

\begin{example}\label{d<mu}
Let $k$ be a field and $t$ a variable over $k$. Let
$A=k[t^3,t^8,t^{10}]$ and $B=k[t^3,t^4,t^5]$. Then $A\subset B$ is a
finite ring extension with $\ud_A(B)=2$ and $\mu_A(B)=3$.
\end{example}
\begin{proof}
Since $k[t^3]\subset A$, then $B=\langle 1,t^4,t^5\rangle_A$ and
$A\subset B$ is a finite ring extension with $\mu_A(B)\leq 3$. If
$x=a+bt^4+ct^5\in B$, with $a,b,c\in A$, then $x^2-2ax\in
A$. Therefore $\ud_A(B)=2$. Let us see that $\mu_A(B)=3$.  Suppose
that there exist $f,g\in B$ such that $B=\langle f,g\rangle_A$, i.e.,
$1,t^4,t^5\in \langle f,g\rangle_A$. Write $f=a_0+t^3f_1$ and
$g=b_0+t^3g_1$, with $a_0,b_0\in k$ and $f_1,g_1\in k[t]$. Since
$1\in\langle f,g\rangle_{A}$, one can suppose that $a_0=1$ and
$b_0=0$. Thus $f=1+a_3t^3+a_4t^4+a_5t^5+\ldots $ and
$g=b_3t^3+b_4t^4+b_5t^5+\ldots $. In particular, every element of
$\langle f,g\rangle_A$ is of the form:
\begin{eqnarray*}
&&(\lambda_0+\lambda_3t^3+\lambda_6t^6+\lambda_8t^8+\ldots )
  (1+a_3t^3+a_4t^4+a_5t^5+\ldots)+\\&&
  (\mu_0+\mu_3t^3+\mu_6t^6+\mu_8t^8+\ldots )
  (b_3t^3+b_4t^4+b_5t^5+\ldots)=\\&&
  (\lambda_0)+(\lambda_3+\lambda_0a_3+\mu_0b_3)t^3+(\lambda_0a_4+\mu_0b_4)t^4+
  (\lambda_0a_5+\mu_0b_5)t^5+\ldots.
\end{eqnarray*}
From $t^4\in \langle f,g\rangle_A$, one deduces that ($\lambda_0=0$
and) $b_4\neq 0$. Hence, one can suppose that $a_4=0$. From $1\in
\langle f,g\rangle_A$, it follows that ($\lambda_0=1$, $\mu_0=0$ and)
$a_5=0$. From $t^4\in \langle f,g\rangle_A$ again, now it follows
$b_5=0$. But from $t^5\in \langle f,g\rangle_A$, one must have
$b_5\neq 0$, a contradiction. Hence $\mu_A(B)=3$.
\end{proof}

\begin{remark}\label{min-int-deg}
In the example above $K=L$ and so $A\subset B$ does not have minimal
integral degree. We will prove that if $A\subset B$ is projective and
finite with $K\subset L$ simple, or if $A$ is integrally closed, then
$A\subset B$ has minimal integral degree (see
Theorem~\ref{proj-klsimple} and Proposition~\ref{kronecker}).
\end{remark}

As for the finiteness of the integral degree, we recall the following.

\begin{remark}\label{d-finite-mu-infinite}
There exist one-dimensional noetherian local domains $A$ with integral
closure $\overline{A}$ such that $\ud_A(\overline{A})$ is finite while
$\mu_A(\overline{A})$ is infinite (see
\cite[Proposition~6.5]{gp}). There exist one-dimensional noetherian
domains $A$ such that $\ud_A(\overline{A})$ is infinite (see
\cite[Example~6.6]{gp}).
\end{remark}

Next we prove that the integral degree coincides with the supremum of
the integral degrees of the localizations. (The analogue for
$\mu_A(B)$ is not true in general.)

\begin{proposition}\label{local-invariant}
Let $A\subset B$ be an integral ring extension. For any $b\in B$,
there exists a maximal ideal $\mfm\in\Max(A)$ such that
$\id_A(b)=\id_{A_{\mathfrak{m}}}(b/1)$. In particular,
\begin{eqnarray*}
\ud_A(B)=\sup\{\ud_{A_{\mathfrak{m}}}(B_{\mathfrak{m}})\mid
\mfm\in\Max(A)\}=\sup\{\ud_{A_{\mathfrak{p}}}(B_{\mathfrak{p}})\mid
\mfp\in\Spec(A)\}.
\end{eqnarray*}
Furthermore, if $\ud_A(B)$ is finite, then there exists
$\mfm\in\Max(A)$ such that
$\ud_A(B)=\ud_{A_{\mathfrak{m}}}(B_{\mathfrak{m}})$.
\end{proposition}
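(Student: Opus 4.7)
The plan is to reformulate $\id_A(b)$ as a vanishing condition on a filtration of $A$-modules that behaves well under localisation. For each $k\geq 0$, set $M_k=\sum_{i=0}^{k-1}Ab^i\subseteq B$. The point is that $b^k\in M_k$ forces $bM_k\subseteq M_{k+1}=M_k$, so for $k\geq 1$ the submodule $M_k$ contains $A$ and is stable under multiplication by $b$ and hence coincides with $A[b]$; the converse is trivial. Therefore
$$\id_A(b)=\min\{k\geq 1\mid b^k\in M_k\}=\min\{k\geq 1\mid A[b]/M_k=0\},$$
and since localisation commutes with these finitely generated submodules, the same reformulation over $A_{\mfp}$ yields $\id_{A_{\mfp}}(b/1)=\min\{k\geq 1\mid (A[b]/M_k)_{\mfp}=0\}$.

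For the first assertion, set $n=\id_A(b)$. Then $b^{n-1}\notin M_{n-1}$ by minimality, so $A[b]/M_{n-1}$ is a nonzero cyclic $A$-module; explicitly $A[b]/M_{n-1}\cong A/J$, where $J=\{a\in A\mid ab^{n-1}\in M_{n-1}\}$ is a proper ideal of $A$. Its support equals the nonempty closed set $V(J)\subseteq\Spec(A)$, which therefore contains some maximal ideal $\mfm$. For this $\mfm$ one has $(A[b]/M_{n-1})_\mfm\neq 0$, whence $\id_{A_\mfm}(b/1)\geq n$; the reverse inequality $\id_{A_\mfm}(b/1)\leq n$ is immediate from specialising an integral equation of degree $n$ for $b$ over $A$ to one for $b/1$ over $A_\mfm$ (cf.\ Proposition~\ref{first-prop}(c)), so equality holds.

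The two supremum formulas then follow formally. Proposition~\ref{first-prop}(e) gives
$$\sup_{\mfm\in\Max(A)}\ud_{A_\mfm}(B_\mfm)\leq \sup_{\mfp\in\Spec(A)}\ud_{A_\mfp}(B_\mfp)\leq \ud_A(B),$$
while applying the first assertion to each $b\in B$ yields the reverse inequality $\ud_A(B)\leq \sup_{\mfm\in\Max(A)}\ud_{A_\mfm}(B_\mfm)$. Finally, if $\ud_A(B)=n$ is finite, then $\{\id_A(b)\mid b\in B\}$ is a set of positive integers bounded above by $n$ with supremum $n$, so $\id_A(b_0)=n$ for some $b_0\in B$; the first assertion applied to $b_0$ furnishes a maximal ideal $\mfm$ with $\ud_{A_\mfm}(B_\mfm)\geq n$, and Proposition~\ref{first-prop}(c) forces equality.

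The substantive step is the first assertion, and the main thing to notice is the cyclic $A$-module $A[b]/M_{n-1}\cong A/J$, which precisely detects the primes at which the local integral degree of $b$ fails to drop below $n$; once this module is identified, the result reduces to the elementary observation that $V(J)$ contains a maximal ideal whenever $J$ is a proper ideal of $A$.
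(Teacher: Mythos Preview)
Your proof is correct and follows essentially the same route as the paper's: both reformulate $\id_A(b)=n$ as the pair of conditions $A[b]/M_{n-1}\neq 0$ and $A[b]/M_n=0$, use that localisation commutes with these quotients, and then pick a maximal ideal in the support of the nonzero module $A[b]/M_{n-1}$. Your presentation is slightly more streamlined in that it avoids the separate treatment of the case $n=1$ and makes explicit the cyclic structure $A[b]/M_{n-1}\cong A/J$, but this is a cosmetic refinement rather than a different argument.
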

\begin{proof}
If $\id_A(b)=1$, then $b\in A$. Thus, for any $\mfm\in\Max(A)$,
$b/1\in A_{\mathfrak{m}}$, so $\id_{A_{\mathfrak{m}}}(b/1)=1$ and
$\id_A(b)=\id_{A_{\mathfrak{m}}}(b/1)$. Suppose that $\id_A(b)=n\geq
2$. Then
\begin{eqnarray*}
A[b]/\langle 1,b,\ldots ,b^{n-2}\rangle\neq 0\espai\mbox{and}\espai
A[b]/\langle 1,b,\ldots ,b^{n-1}\rangle=0.
\end{eqnarray*} 
Clearly, for every $\mfp\in\Spec(A)$, and for every $m\geq 1$,
\begin{eqnarray*}
(A[b]/\langle 1,b,\ldots
,b^{m}\rangle)_{\mathfrak{p}}=A_{\mathfrak{p}}[b/1]/\langle
1,b/1,\ldots ,b^{m}/1\rangle. 
\end{eqnarray*}
In particular, $A_{\mathfrak{p}}[b/1]/\langle 1,b/1,\ldots
,b^{n-1}/1\rangle=0$, for every $\mfp\in\Spec(A)$. Since $A[b]/\langle
1,b,\ldots ,b^{n-2}\rangle\neq 0$, then there exists a maximal ideal
$\mfm\in\Max(A)$ with
\begin{eqnarray*}
A_{\mathfrak{m}}[b/1]/\langle 1,b/1,\ldots ,b^{n-2}/1\rangle\neq
0\espai\mbox{and}\espai A_{\mathfrak{m}}[b/1]/\langle 1,b/1,\ldots
,b^{n-1}/1\rangle=0.
\end{eqnarray*}
Therefore, $\id_{A_{\mathfrak{m}}}(b/1)=n$ and
$\id_A(b)=\id_{A_{\mathfrak{m}}}(b/1)$. In particular, 
\begin{eqnarray*}
\ud_A(B)\leq \sup\{\ud_{A_{\mathfrak{m}}}(B_{\mathfrak{m}})\mid
\mfm\in\Max(A)\}\leq \sup\{\ud_{A_{\mathfrak{p}}}(B_{\mathfrak{p}})\mid
\mfp\in\Spec(A)\}.
\end{eqnarray*}
On the other hand, by Proposition~\ref{first-prop},
$\sup\{\ud_{A_{\mathfrak{p}}}(B_{\mathfrak{p}})\mid
\mfp\in\Spec(A)\}\leq \ud_A(B)$. Finally, if $\ud_A(B)$ is finite,
then there exists $b\in B$ such that $\id_A(b)=\ud_A(B)$. We have just
shown above that there exists a maximal ideal $\mfm\in\Max(A)$ with
$\id_A(b)=\id_{A_{\mathfrak{m}}}(b/1)$. Therefore
\begin{eqnarray*}
\ud_A(B)=\id_A(b)=\id_{A_{\mathfrak{m}}}(b/1)\leq
\ud_{A_{\mathfrak{m}}}(B_{\mathfrak{m}})\leq\ud_A(B)
\end{eqnarray*}
and the equality holds.
\end{proof}

\begin{remark}\label{generic-flatness}
Suppose that $A\subset B$ is finite. Since $A$ is a domain, by generic
flatness, there exists $f\in A\setminus\{0\}$ such that $A_f\subset
B_f$ is a finite free extension (see, e.g.,
\cite[Theorem~22.A]{matsumura-ca}). In particular, for every $\mfp\in
D(f)=\Spec(A)\setminus V(f)$, $A_{\mathfrak{p}}\subset
B_{\mathfrak{p}}$ is a finite free ring extension. So
$\ud_A(B)=\max\{\ud_1,\ud_2\}$, where
$\ud_1=\sup\{\ud_{A_{\mathfrak{p}}}(B_{\mathfrak{p}})\mid \mfp\in
V(f)\}$ and $\ud_2=\sup\{\ud_{A_{\mathfrak{p}}}(B_{\mathfrak{p}})\mid
A_{\mathfrak{p}}\subset B_{\mathfrak{p}}\mbox{ is free}\}$.
Therefore, if one is able to control the integral degree for finite
free ring extensions, the calculation of $\ud_A(B)$ is reduced to find
$\ud_1=\sup\{\ud_{A_{\mathfrak{p}}}(B_{\mathfrak{p}})\mid \mfp\in
V(f)\}$, where $V(f)$ is a proper closed set of $\Spec(A)$. We will
come back to this question in Theorem~\ref{free-klsimple}.
\end{remark}

\section{Sub-multiplicativity}\label{sm}

In this section we study the sub-multiplicativity of the integral
degree with respect to $A\subset B$, i.e., whether $\ud_A(C)\leq
\ud_A(B)\ud_B(C)$ holds for every integral ring extension $B\subset
C$. Observe that, in this situation, $A\subset C$ is an integral ring
extension too and, by definition, $\ud_B(C)\leq \ud_A(C)$. We start
with a useful criterion to determine possible bounds $\nu\,\in\mbn$ in
the inequality $\ud_A(C)\leq \nu\,\ud_B(C)$.

\begin{lemma}\label{equivalences} 
Let $A\subset B$ and $B\subset C$ be two integral ring extensions.
Set $\nu\in\mbn$. The following conditions are equivalent.
\begin{itemize}
\item[$(i)$] $\ud_A(D)\leq \nu\,\ud_B(D)$, for every ring $D$ such that
  $B\subseteq D\subseteq C$;
\item[$(ii)$] $\ud_A(D)\leq \nu\,\ud_B(D)$, for every ring $D$ such that
  $D=B[\alpha]$ for some $\alpha\in C$;
\item[$(iii)$] $\id_A(\alpha)\leq \nu\,\id_B(\alpha)$, for every
element $\alpha\in C$.
\end{itemize}
In particular, if $(iii)$ holds, then $\ud_A(C)\leq \nu\,\ud_B(C)$.
\end{lemma}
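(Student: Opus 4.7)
The plan is to prove the cyclic chain of implications $(i) \Rightarrow (ii) \Rightarrow (iii) \Rightarrow (i)$, and then deduce the ``in particular'' clause by specializing to $D = C$. None of the implications should require a real calculation; the whole content of the lemma is the interplay between the pointwise invariant $\id$ and the sup-invariant $\ud$, together with the already-proved fact from Proposition~\ref{first-prop}(b) that for a simple extension these two invariants coincide.

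The implication $(i) \Rightarrow (ii)$ is immediate, since rings of the form $B[\alpha]$, with $\alpha \in C$, are a particular type of intermediate ring between $B$ and $C$.

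For $(ii) \Rightarrow (iii)$, fix $\alpha \in C$ and set $D = B[\alpha]$. On the one hand, by definition of $\ud_A(D)$, we have $\id_A(\alpha) \leq \ud_A(D)$. On the other hand, since $B \subset B[\alpha] = D$ is a simple integral ring extension, Proposition~\ref{first-prop}(b) gives $\ud_B(D) = \id_B(\alpha)$. Applying the hypothesis $(ii)$ to this $D$ then yields
\begin{eqnarray*}
\id_A(\alpha) \leq \ud_A(D) \leq \nu\, \ud_B(D) = \nu\, \id_B(\alpha),
\end{eqnarray*}
which is $(iii)$. This is the only step with genuine content, and it is where Proposition~\ref{first-prop}(b) is essential; without that identification one could only compare suprema, not individual elements.

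For $(iii) \Rightarrow (i)$, let $D$ be any ring with $B \subseteq D \subseteq C$. For every $\alpha \in D \subseteq C$ we have, by $(iii)$, $\id_A(\alpha) \leq \nu\, \id_B(\alpha) \leq \nu\, \ud_B(D)$. Taking the supremum over $\alpha \in D$ gives $\ud_A(D) \leq \nu\, \ud_B(D)$. Finally, the ``in particular'' statement follows by applying $(i)$ to $D = C$. I do not foresee any real obstacle here; the mild subtlety is merely remembering to invoke Proposition~\ref{first-prop}(b) in the step $(ii) \Rightarrow (iii)$ so as to pass from the sup $\ud_B(B[\alpha])$ back down to the pointwise value $\id_B(\alpha)$.
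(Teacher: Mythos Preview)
Your proof is correct and follows essentially the same cyclic scheme $(i)\Rightarrow(ii)\Rightarrow(iii)\Rightarrow(i)$ as the paper, with the same use of Proposition~\ref{first-prop}(b) to identify $\ud_B(B[\alpha])=\id_B(\alpha)$. The only cosmetic difference is that in $(ii)\Rightarrow(iii)$ you bound $\id_A(\alpha)\leq\ud_A(B[\alpha])$ directly from the definition of $\ud_A$ as a supremum, whereas the paper routes this through $\id_A(\alpha)=\ud_A(A[\alpha])\leq\ud_A(B[\alpha])$ using monotonicity under inclusion; both are equally valid.
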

\begin{proof}
Clearly, $(i)\Rightarrow (ii)$. Let $\alpha\in C$; in particular,
$\alpha$ is integral over $A$. Since $A[\alpha]\subset B[\alpha]$,
then $\id_A(\alpha)=\ud_A(A[\alpha])\leq\ud_A(B[\alpha])$. By
hypothesis $(ii)$, $\ud_A(B[\alpha])\leq
\nu\,\ud_B(B[\alpha])=\nu\,\id_B(\alpha)$. Therefore, $\id_A(\alpha)\leq
\nu\,\id_B(\alpha)$, which proves $(ii)\Rightarrow (iii)$. To see
$(iii)\Rightarrow (i)$, take $D$ with $B\subseteq D\subseteq C$ and
$\alpha\in D$, which will be integral over $B$ and, hence, integral
over $A$. By hypothesis $(iii)$, $\id_A(\alpha)\leq
\nu\,\id_B(\alpha)\leq \nu\,\ud_B(D)$. Taking supremum over all $\alpha\in
D$, $\ud_A(D)\leq \nu\,\ud_B(D)$. 

Finally, if $(iii)$ holds, then $(i)$ holds for $D=C$, so
$\ud_A(C)\leq\nu\,\ud_B(C)$.
\end{proof}

The next result shows that we can take $\nu=\mu_A(B)$ as a particular
$\nu\in\mbn$, understanding that if $A\subset B$ is not finite, then
$\mu_A(B)=\infty$ and the inequality is trivial.

\begin{theorem}\label{m-equal-mu}
Let $A\subset B$ and $B\subset C$ be two integral ring extensions.
Then, for every $\alpha\in C$,
\begin{eqnarray*}
\id_A(\alpha)\leq \mu_A(B)\id_B(\alpha).
\end{eqnarray*}
In particular, 
\begin{eqnarray*}
\ud_A(C)\leq \mu_A(B)\ud_B(C).
\end{eqnarray*}
\end{theorem}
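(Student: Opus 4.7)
The plan is to invoke Lemma~\ref{equivalences} and reduce the statement to proving the pointwise inequality $\id_A(\alpha) \leq \mu_A(B)\,\id_B(\alpha)$ for every $\alpha \in C$; the global inequality $\ud_A(C) \leq \mu_A(B)\,\ud_B(C)$ will then follow automatically from the ``in particular'' part of that lemma. If $A \subset B$ is not finite then $\mu_A(B) = \infty$ and there is nothing to prove, so I may assume $m := \mu_A(B) < \infty$, with generators $b_1,\ldots,b_m$ of $B$ as an $A$-module, and $d := \id_B(\alpha) < \infty$.

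The core idea is the classical determinant (Cayley--Hamilton) trick applied to the subring $N := B[\alpha] \subseteq C$, regarded as an $A$-module. First I would observe that $N$ is generated as a $B$-module by $1, \alpha, \ldots, \alpha^{d-1}$, since $\alpha$ satisfies a monic equation of degree $d$ over $B$; combining this with the $A$-module generators of $B$, the $md$ products $b_i \alpha^j$ (for $1 \leq i \leq m$, $0 \leq j \leq d-1$) generate $N$ as an $A$-module. Label these generators $n_1, \ldots, n_{md}$. Multiplication by $\alpha$ is an $A$-linear endomorphism of $N$, so there exist $a_{ij} \in A$ with
\begin{eqnarray*}
\alpha\, n_i = \sum_{j=1}^{md} a_{ij}\, n_j \qquad (1 \leq i \leq md).
\end{eqnarray*}

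Then $(\alpha\, I - (a_{ij}))$ annihilates the column vector $(n_1,\ldots,n_{md})^T$, and multiplying on the left by the adjugate matrix yields $\det(\alpha\, I - (a_{ij}))\, n_i = 0$ in $C$ for each $i$. Since $1 \in N$ is an $A$-linear combination of the $n_i$, this forces $\det(\alpha\, I - (a_{ij})) = 0$ in $C$. Expanding the determinant gives a monic polynomial $p(T) \in A[T]$ of degree $md$ with $p(\alpha) = 0$, whence $\id_A(\alpha) \leq md = \mu_A(B)\,\id_B(\alpha)$, as required.

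There is no real obstacle here: the argument is just the standard characterization of integrality via faithful finitely generated modules, carefully combined with the $B$-module structure of $B[\alpha]$. The only minor point to handle is the degenerate case where $\mu_A(B)$ or $\id_B(\alpha)$ is infinite, which is vacuous, and the transition from the pointwise bound to the global bound, which is precisely the content of Lemma~\ref{equivalences}.
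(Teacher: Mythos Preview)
Your proof is correct. It is precisely the determinantal (Cayley--Hamilton) argument that the paper records as an alternative proof in Remark~\ref{det-trick}, immediately after the theorem.

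The paper's \emph{official} proof, however, is shorter and avoids unpacking the determinant trick: it simply chains together already-established inequalities,
\begin{eqnarray*}
\id_A(\alpha)=\ud_A(A[\alpha])\leq \ud_A(B[\alpha])\leq \mu_A(B[\alpha])\leq \mu_A(B)\,\mu_B(B[\alpha])=\mu_A(B)\,\id_B(\alpha),
\end{eqnarray*}
using Proposition~\ref{first-prop}(a),(b) and the sub-multiplicativity of $\mu$, then finishes with Lemma~\ref{equivalences} exactly as you do. Your approach has the virtue of being self-contained and exhibiting an explicit monic polynomial of the right degree; the paper's approach has the virtue of making visible that the result is a formal consequence of $\ud\leq\mu$ and the multiplicativity of $\mu$, with no new computation needed.
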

\begin{proof}
Let $\alpha\in C$, which is integral over $B$ and $A$. Then
\begin{eqnarray*}
\id_A(\alpha)=\ud_A(A[\alpha])\leq \ud_A(B[\alpha])\leq
\mu_A(B[\alpha])\leq
\mu_A(B)\mu_B(B[\alpha])=\mu_A(B)\id_B(\alpha).
\end{eqnarray*}
To finish, apply Lemma~\ref{equivalences}.
\end{proof}

\begin{remark}\label{det-trick}
A proof of Theorem~\ref{m-equal-mu} using the standard ``determinantal
trick'' would be as follows. Suppose $B=\langle b_1,\ldots
,b_n\rangle_A$, with $\mu_A(B)=n$, and consider $\alpha\in C$ with
$\id_B(\alpha)=m$. Let $X$ be the following $nm\times 1$ vector, whose
entries form an $A$-module generating set of $B[\alpha]$,
\begin{eqnarray*}
X^{\top}=(1,\alpha,\ldots,\alpha^{m-1},b_1,b_1\alpha,\ldots,
b_1\alpha^{m-1},\ldots, b_n,b_n\alpha,\ldots, b_n\alpha^{m-1}).
\end{eqnarray*}
Then there exists a $nm$ square matrix $P$ with coefficients in $A$,
such that $\alpha X=PX$. Therefore, $(\alpha\mbox{\rm
  I}-P)X=0$. Multiplying by the adjugate matrix (that is, the
transpose of the cofactor matrix) leads to
$Q_P(\alpha)=\det(\alpha\mbox{\rm I}-P)=0$, where $Q_P(T)$ is the
characteristic polynomial of $P$ (recall that $C$ is a domain). Hence
$\id_A(\alpha)\leq\deg Q_P(T)=nm=\mu_A(B)\id_B(\alpha)$.
\end{remark}

As an immediate consequence of Theorem~\ref{m-equal-mu}, we obtain the
sub-multiplicativity of the integral degree with respect to integral
ring extensions of maximal integral degree.

\begin{corollary}\label{sm-mid}
Let $A\subset B$ and $B\subset C$ be two integral ring extensions.  If
$\ud_A(B)=\mu_A(B)$, then
\begin{eqnarray*}
\ud_A(C)\leq \ud_A(B)\ud_B(C).
\end{eqnarray*}
\end{corollary}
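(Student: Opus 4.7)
The plan is essentially to invoke Theorem~\ref{m-equal-mu} and then use the hypothesis to tighten the constant. Theorem~\ref{m-equal-mu} already establishes the inequality
\begin{eqnarray*}
\ud_A(C)\leq \mu_A(B)\,\ud_B(C)
\end{eqnarray*}
for arbitrary towers $A\subset B\subset C$ of integral ring extensions, with the convention that the right-hand side is interpreted as $\infty$ when $\mu_A(B)=\infty$. So the work of bounding $\id_A(\alpha)$ in terms of $\id_B(\alpha)$ has already been done (via Lemma~\ref{equivalences} and the determinantal trick summarized in Remark~\ref{det-trick}), and nothing new of that flavour is required here.

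Given the hypothesis $\ud_A(B)=\mu_A(B)$, I would simply substitute $\mu_A(B)$ by $\ud_A(B)$ in the inequality above, yielding $\ud_A(C)\leq \ud_A(B)\,\ud_B(C)$ immediately. The only mild point to mention is that the hypothesis is vacuous or trivial when $\mu_A(B)=\infty$, since in that case the conclusion $\ud_A(C)\leq \ud_A(B)\,\ud_B(C)=\infty$ holds automatically; so one may tacitly assume $\mu_A(B)<\infty$, in which case $\ud_A(B)=\mu_A(B)$ is a genuine numerical equality and the substitution is unambiguous. I do not foresee any obstacle: the corollary is a one-line deduction from Theorem~\ref{m-equal-mu}.
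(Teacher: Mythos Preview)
Your proposal is correct and matches the paper's approach exactly: the paper presents Corollary~\ref{sm-mid} as an immediate consequence of Theorem~\ref{m-equal-mu}, and your one-line substitution of $\mu_A(B)$ by $\ud_A(B)$ under the hypothesis is precisely what is intended. Your aside about the $\mu_A(B)=\infty$ case is a harmless clarification the paper leaves implicit.
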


To finish this section we recover part of a result shown in \cite{gp},
but now with a slightly different proof.

\begin{corollary}\label{exponential}
{\rm (\cite[Proposition~6.7]{gp})} Let $A\subset B$ and $B\subset C$
be two integral ring extensions. Then, for every $\alpha\in C$,
\begin{eqnarray*}
\id_{A}(\alpha)\leq \ud_{A}(B)^{{\rm d}_{B}(C)}\id_{B}(\alpha).
\end{eqnarray*}
In particular,
\begin{eqnarray*}
\ud_{A}(C)\leq \ud_{A}(B)^{{\rm d}_{B}(C)}\ud_{B}(C).
\end{eqnarray*}
Furthermore, if $A\subset B$ and $B\subset C$ have finite integral
degrees, then $A\subset C$ has finite integral degree.
\end{corollary}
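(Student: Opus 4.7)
The plan is to reduce to Lemma~\ref{equivalences} with the specific value $\nu=\ud_A(B)^{\ud_B(C)}$: if one can establish that $\id_A(\alpha)\leq\nu\id_B(\alpha)$ for every $\alpha\in C$, then both displayed inequalities follow at once from that lemma, and the finiteness of $\ud_A(C)$ under the finiteness of $\ud_A(B)$ and $\ud_B(C)$ is immediate.

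To bound $\id_A(\alpha)$, I would fix $\alpha\in C$, set $m=\id_B(\alpha)\leq\ud_B(C)$, and pick a minimal integral equation $\alpha^m+b_1\alpha^{m-1}+\cdots +b_m=0$ with $b_i\in B$. The idea is to build an intermediate finite $A$-module that contains $\alpha$ and whose $A$-rank is controlled by $\ud_A(B)$ rather than by $\mu_A(B)$: since each $b_i$ satisfies a monic relation over $A$ of degree at most $\ud_A(B)$, the subring $A[b_1,\ldots,b_m]\subset B$ is generated as an $A$-module by the monomials $b_1^{e_1}\cdots b_m^{e_m}$ with $0\leq e_i<\ud_A(B)$, giving at most $\ud_A(B)^m$ generators. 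Adjoining $\alpha$ and using the chosen relation, the ring $D=A[b_1,\ldots,b_m,\alpha]$ is generated over $A[b_1,\ldots,b_m]$ by $1,\alpha,\ldots,\alpha^{m-1}$, so sub-multiplicativity of $\mu$ yields $\mu_A(D)\leq m\cdot\ud_A(B)^m$.

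The last step is to apply the determinantal trick of Remark~\ref{det-trick} directly to $D$: it is a finitely generated $A$-module containing $\alpha$ and sitting inside the domain $C$, so multiplication by $\alpha$ is represented by a square matrix $P$ with entries in $A$ acting on an $A$-generating column vector of $D$, and the adjoint identity forces $\det(\alpha\,\mathrm{I}-P)=0$. This produces a monic polynomial in $A[T]$ of degree at most $\mu_A(D)$ vanishing at $\alpha$, whence $\id_A(\alpha)\leq\mu_A(D)\leq m\cdot\ud_A(B)^m\leq\id_B(\alpha)\cdot\ud_A(B)^{\ud_B(C)}$, which is precisely the bound that feeds into Lemma~\ref{equivalences}. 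The one delicate point is the intermediate construction: since one has no a priori control over $\mu_A(B)$ itself, one must package the monomials in the $b_i$ together with the powers of $\alpha$ to obtain a module whose size is governed solely by $\ud_A(B)$, and this inevitably introduces the exponent $m\leq\ud_B(C)$ that appears in the final bound.
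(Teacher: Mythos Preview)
Your proof is correct and follows essentially the same route as the paper: pick a minimal integral equation of $\alpha$ over $B$ with coefficients $b_1,\ldots,b_m$, form the intermediate ring $E=A[b_1,\ldots,b_m]$ with $\mu_A(E)\leq\ud_A(B)^m$, note that $\mu_E(E[\alpha])=m=\id_B(\alpha)$, and conclude $\id_A(\alpha)\leq\mu_A(E[\alpha])\leq\ud_A(B)^m\cdot\id_B(\alpha)$ before invoking Lemma~\ref{equivalences}. The only cosmetic difference is that the paper phrases the step $\id_A(\alpha)\leq\mu_A(E[\alpha])$ via the chain $\id_A(\alpha)=\ud_A(A[\alpha])\leq\ud_A(E[\alpha])\leq\mu_A(E[\alpha])$ (Proposition~\ref{first-prop}), whereas you invoke the determinantal trick of Remark~\ref{det-trick} directly; these are the same argument.
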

\begin{proof}
Let $\alpha\in C$; in particular, $\alpha$ is integral over $B$ and
over $A$. Let $\mdp(T)$ be a minimal degree polynomial of $\alpha$ over
$B$, $\mdp(T)=T^{n}+b_1T^{n-1}+\ldots +b_n\in B[T]$, so that
$n=\id_B(\alpha)\leq \ud_B(C)$.  Set $E=A[b_1,\ldots ,b_n]$, where
$A\subseteq E\subseteq B$. Therefore,
\begin{eqnarray*}
\id_A(\alpha)=\ud_A(A[\alpha])\leq \ud_A(E[\alpha])\leq
\mu_A(E[\alpha])\leq \mu_A(E)\mu_E(E[\alpha]),
\end{eqnarray*}
where clearly $\mu_A(E)\leq \prod_{i=1}^n\id_A(b_i)\leq\ud_A(B)^{{\rm
    d}_B(C)}$, and $\mu_E(E[\alpha])=\id_E(\alpha)=\id_B(\alpha)$. To
finish, apply Lemma~\ref{equivalences}.
\end{proof}

\section{Integral degree of algebraic field 
extensions}\label{field}

In this section, we suppose that $A=K$, $B=L$ and $C=M$ are fields.
For ease of reading, we begin by recalling some definitions and basic
facts (see, e.g., \cite[Chapter~V]{bourbaki} and
\cite{jacobson-book}).

\begin{reminder}\label{decomposition}
Let $K\subset L$ be a finite field extension.
\begin{itemize}
\item[$\bullet$] A polynomial is separable if it has no multiple
  roots (in any field extension). The extension $K\subset L$ is
  separable if every element of $L$ is the root of a separable
  polynomial of $K[T]$. A field $K$ is perfect if either has
  characteristic zero or else, when it has characteristic $p>0$, every
  element is a $p$-th power in $K$. If $K$ is perfect, then
  $K\subset L$ is separable.
\item[$\bullet$] The primitive element theorem states that a finite
  separable field extension is simple. Even more, there exists an
  ``extended'' version which affirms that a simple algebraic field
  extension of a finite separable field extension is again simple (cf.
  \cite[III, Chapter~I, \S~11, Theorem~14]{jacobson-book}).
\item[$\bullet$] Let $K_s$ be the separable closure of $K$ in $L$,
  i.e., the set of all elements of $L$ which are separable over
  $K$. Then $K_s$ is a field and $K\subset K_s$ is a separable
  extension. Its degree $[K_s:K]$ is called the separable degree and
  is denoted by $[L:K]_s:=[K_s:K]$.
\end{itemize}
For the rest of the reminder, suppose that $K$ has characteristic
$p>0$ and let $K_s$ be the separable closure of $K$ in $L$.
\begin{itemize}
\item[$\bullet$] Then $K_s\subset L$ is a purely inseparable field
  extension, i.e., for every element $\alpha\in L$, there exists an
  integer $m\geq 1$ such that $\alpha^{p^m}\in K_s$. The least such
  integer $m$ is called the height of $\alpha$ over $K_s$. Let
  $\height_{K_s}(\alpha)$ stand for the height of $\alpha$ over
  $K_s$. Set $h=\sup\{\height_{K_s}(\alpha)\mid \alpha\in L\}$ and
  call $h$ the {\em height of the purely inseparable extension}
  $K_s\subset L$.
\item[$\bullet$] Given $\alpha\in L$ with $\height_{K_s}(\alpha)=m$,
  setting $a=\alpha^{p^m}$, one proves that $T^{p^m}-a$ is irreducible
  in $K_s[T]$ (see, e.g., \cite[Chapter~V, \S~5]{bourbaki}). Thus
  $[K_s(\alpha):K_s]=p^m$. Since $K_s\subset L$ is a finite extension,
  then $L=K_s(\alpha_1,\ldots,\alpha_r)$, where each $\alpha_i$ is
  purely inseparable over $K_s(\alpha_1,\ldots ,\alpha_{i-1})$,
  $i=2,\ldots ,r$. Hence $[L:K_s]=p^e$, for some $e\geq 1$. Call $e$
  the {\em exponent of the purely inseparable extension} $K_s\subset
  L$. Note that, since $[K_s(\alpha):K_s]$ (which is $p^m$) divides
  $[L:K_s]=[L:K_s(\alpha)][K_s(\alpha):K_s]$ (which is $p^e$), then
  $m\leq e$ and $h\leq e$.
\end{itemize}
\end{reminder}

Our first result characterizes simple finite field extensions
as finite field extensions of maximal integral degree.

\begin{proposition}\label{field-separable-case}
Let $K\subset L$ be a finite field extension. Then $K\subset L$ is
simple if and only if $\ud_K(L)=[L:K]$.
\end{proposition}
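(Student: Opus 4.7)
The plan is to prove the two implications separately, with both relying on the fact that for an element $\alpha$ algebraic over a field $K$, the integral degree $\id_K(\alpha)$ coincides with $[K(\alpha):K]$; this is because in the field case the minimal degree polynomial must divide any other vanishing polynomial, so it agrees with the usual minimal polynomial, whose degree is $[K(\alpha):K]$.

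For the forward implication, suppose $K\subset L$ is simple, say $L=K[\alpha]$. By Proposition~\ref{first-prop}(b) applied to the simple integral extension $K\subset K[\alpha]$, we get $\id_K(\alpha)=\ud_K(L)=\mu_K(L)$. Since $L$ is a finite field extension of $K$, we have $\mu_K(L)=\dim_KL=[L:K]$, which gives $\ud_K(L)=[L:K]$.

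For the reverse implication, assume $\ud_K(L)=[L:K]$. Since $[L:K]$ is finite, the integer-valued function $b\mapsto \id_K(b)$ on $L$ is bounded above by $[L:K]$, so its supremum is attained: there exists $\alpha\in L$ with $\id_K(\alpha)=[L:K]$. By the preliminary observation, $[K(\alpha):K]=\id_K(\alpha)=[L:K]$, and since $K(\alpha)\subseteq L$ is a $K$-subspace of the same finite $K$-dimension as $L$, equality $K(\alpha)=L$ follows. Hence $K\subset L$ is simple.

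I expect no serious obstacle here; the only subtle point to flag carefully is the identification $\id_K(\alpha)=[K(\alpha):K]$ in the field setting (that minimal degree polynomials over a field are unique and coincide with minimal polynomials), and the fact that, $[L:K]$ being finite, the supremum defining $\ud_K(L)$ is realised by some element of $L$.
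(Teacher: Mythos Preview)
Your proof is correct and follows essentially the same approach as the paper's. Both directions rely on the identification $\id_K(\alpha)=\mu_K(K(\alpha))=[K(\alpha):K]$ (which the paper obtains from Proposition~\ref{first-prop}(b), while you justify it directly via uniqueness of the minimal polynomial), and the reverse implication in both proofs proceeds by realising the finite supremum $\ud_K(L)$ at some $\alpha$ and then comparing $[K(\alpha):K]$ with $[L:K]$.
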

\begin{proof}
Since $K\subset L$ is an algebraic extension, $K(\alpha)=K[\alpha]$,
for any $\alpha\in L$. By Proposition~\ref{first-prop},~$(b)$,
$\id_K(\alpha)=\ud_K(K(\alpha))=[K(\alpha):K]$. Therefore,
\begin{eqnarray}\label{chain}
[L:K]=[L:K(\alpha)][K(\alpha):K]=[L:K(\alpha)]\ud_K(K(\alpha))=
[L:K(\alpha)]\id_K(\alpha).
\end{eqnarray}
If $K\subset L$ is simple, then $L=K(\alpha)$, for some $\alpha\in
L$. Using \eqref{chain}, it follows that
\begin{eqnarray*}
[L:K]=[L:K(\alpha)]\ud_K(K(\alpha))=\ud_K(L).
\end{eqnarray*}
Conversely, if $\ud_K(L)=[L:K]<\infty$, by definition, there exists
$\alpha\in L$ with $\id_K(\alpha)=[L:K]$. By \eqref{chain} again, it
follows that $[L:K(\alpha)]=1$ and $K\subset L$ is simple.
\end{proof}

Using the primitive element theorem we obtain the following result.

\begin{corollary}\label{sep=}
Let $K\subset L$ be a finite separable field extension. Then
$\ud_K(L)=[L:K]$.
\end{corollary}

The ``extended'' version of the primitive element theorem will show to
be very useful in proving the next result.

\begin{proposition}\label{height}
Let $K\subset L$ be a finite field extension. Suppose that $K$ has
characteristic $p>0$ and let $K_s$ be the separable closure of $K$ in
$L$. Set $h$ and $e$ to be the height and exponent, respectively, of
the finite purely inseparable field extension $K_s\subset L$. Then the
following hold.
\begin{itemize}
\item[$(a)$] $\ud_K(L)=\ud_K(K_s)\ud_{K_s}(L)$.
\item[$(b)$] For every $\alpha\in L$, $\ud_{K_s}(\alpha)=p^m$, where
  $m$ is the height of $\alpha$ over $K_s$.
\item[$(c)$] $\ud_{K_s}(L)=p^h$ and $[L:K_s]=p^e$.
\item[$(d)$] $[L:K]_s$ divides $\ud_K(L)$ and $\ud_K(L)$ divides
  $[L:K]$. Concretely, 
\begin{eqnarray*}
\ud_K(L)=[L:K]_sp^h\mbox{ and }[L:K]=p^{e-h}\ud_K(L).
\end{eqnarray*}
\end{itemize}
\end{proposition}
\begin{proof}
By Corollary~\ref{sep=}, $\ud_{K}(K_s)=[K_s:K]$. By
Corollary~\ref{sm-mid}, $\ud_K(L)\leq \ud_K(K_s)\ud_{K_s}(L)$. To see
the other inequality, take $\alpha\in L$ with
$\id_{K_s}(\alpha)=\ud_{K_s}(L)$. By
Proposition~\ref{first-prop},~$(b)$,
\begin{eqnarray*}
\id_{K_s}(\alpha)=\ud_{K_s}(K_s[\alpha])=\mu_{K_s}(K_s[\alpha]).
\end{eqnarray*}
By the extended primitive element theorem, $K\subset K_s[\alpha]$ is a
simple algebraic field extension (cf. \cite[III, Chapter~I, \S~11,
  Theorem~14]{jacobson-book}). Hence, by
Proposition~\ref{first-prop},~$(b)$,
$\ud_{K}(K_s[\alpha])=[K_s[\alpha]:K]$. Since $K\subset K_s$ is a
finite separable extension, by Corollary~\ref{sep=},
$\ud_{K}(K_s)=[K_s:K]$. Writing all together:
\begin{eqnarray*}
\ud_K(K_s)\ud_{K_s}(L)=[K_s:K]\id_{K_s}(\alpha)=
[K_s:K]\mu_{K_s}(K_s[\alpha])=[K_s[\alpha]:K]=
\ud_K(K_s[\alpha])\leq \ud_K(L).
\end{eqnarray*}
This proves $(a)$. Let $\alpha\in L$ with
$\height_{K_s}(\alpha)=m$. Set $a=\alpha^{p^m}$. Then $T^{p^m}-a\in
K_s[T]$ is irreducible in $K_s[T]$ and hence it is the minimimal
polynomial of $\alpha$ over $K_s$. It follows that
$\id_{K_s}(\alpha)=p^m$. This proves $(b)$. Therefore,
\begin{eqnarray*}
\ud_{K_s}(L)=\sup\{\id_{K_s}(\alpha)\mid \alpha\in L\}= \sup\{p^{{\rm
    ht}_{K_s}(\alpha)}\mid \alpha\in L\}= p^{\sup\{{\rm
    ht}_{K_s}(\alpha)\mid \alpha\in L\}}=p^h,
\end{eqnarray*}
which proves $(c)$. Finally, $(d)$ follows from $(a)$, $(c)$ and
Corollary~\ref{sep=} applied repeatedly. Indeed,
\begin{eqnarray*}
\ud_K(L)=\ud_K(K_s)\ud_{K_s}(L)=[K_s:K]p^h=[L:K]_sp^h
\end{eqnarray*}
and
\begin{eqnarray*}
[L:K]=[L:K_s][K_s:K]=p^e\ud_{K}(K_s)=p^{e-h}\ud_{K_s}(L)\ud_{K}(K_s)
=p^{e-h}\ud_{K}(L).
\end{eqnarray*}
\end{proof}

Here there is an example of a finite field extension of non maximal
integral degree. 

\begin{example}\label{ud-not-equal-mu}
Let $p>1$ be a prime and $K=\mbf_p(u_1^p,u_2^p)$, where $u_1,u_2$ are
algebraically independent over $\mbf_p$. Set $L=K[u_1,u_2]$. Then
$K\subset L$ is a finite purely inseparable field extension with
$\ud_K(L)=p$. However $[L:K]=p^2$.
\end{example}
\begin{proof}
Any $\beta\in L$ is of the form $\beta=\sum_{0\leq i,j\leq
  p-1}a_{i,j}u_1^{i}u_2^{j}$, with $a_{i,j}\in K$. So
\begin{eqnarray*}
\beta^p=\sum_{0\leq i,j\leq p-1}a_{i,j}^pu_1^{ip}u_2^{jp}=
\sum_{0\leq i,j\leq p-1}a_{i,j}^p(u_1^p)^i(u_2^p)^{j},
\end{eqnarray*}
which is an element of $K$. Therefore $\beta^p\in K$ and
$\id_K(\beta)\leq p$. Since $\id_K(u_1)=p$, it follows that
$\ud_K(L)=p$. Since $K\varsubsetneq K(u_1)\varsubsetneq L$ are finite
field extensions, each one of degree $p$, by the multiplicative
formula for algebraic field extensions,
$[L:K]=[L:K(u_1)][K(u_1):K]=p^2$.
\end{proof}

Similarly, we obtain an example of an infinite field extension with
finite integral degree (see also Remark~\ref{d-finite-mu-infinite}).

\begin{example}\label{d-finite-mu-infinite-fields}
Let $p>1$ be a prime and $K=\mbf_p(u_1^p,u_2^p,\ldots )$, where
$u_1,u_2,\ldots $ are algebraically independent over $\mbf_p$. Set
$L=K[u_1,u_2,\ldots ]$. Then $\ud_K(L)=p$ but $[L:K]=\infty$.
\end{example}

Now we prove the sub-multiplicativity of the integral degree with
respect to an algebraic field extension $K\subset L$.

\begin{theorem}\label{sm-afe}
Let $K\subset L$ and $L\subset M$ be two algebraic field extensions.
Then, for every $\alpha\in M$,
\begin{eqnarray*}
\id_K(\alpha)\leq \ud_K(L)\id_L(\alpha).
\end{eqnarray*}
In particular,
\begin{eqnarray*}
\ud_{K}(M)\leq\ud_{K}(L)\ud_{L}(M).
\end{eqnarray*}
\end{theorem}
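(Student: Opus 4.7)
The plan is to prove the pointwise inequality $\id_K(\alpha)\leq \ud_K(L)\id_L(\alpha)$ for every $\alpha\in M$, from which the displayed sub-multiplicativity $\ud_K(M)\leq \ud_K(L)\ud_L(M)$ follows immediately by Lemma~\ref{equivalences} (with $\nu=\ud_K(L)$ and $D=M$). Fix $\alpha\in M$, set $m:=\id_L(\alpha)$, and let $\mdp(T)=T^m+b_1T^{m-1}+\ldots+b_m\in L[T]$ be a minimal polynomial of $\alpha$ over $L$. The first move is to pass to the \emph{finite} subextension $E_0:=K(b_1,\ldots,b_m)\subseteq L$: one has $\ud_K(E_0)\leq \ud_K(L)$ (since every element of $E_0$ lies in $L$) and $\mdp(T)\in E_0[T]$ remains a minimal polynomial of $\alpha$ over $E_0$. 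It therefore suffices to prove the sharper inequality $\id_K(\alpha)\leq \ud_K(E_0)\cdot m$ under the extra assumption that $K\subseteq E_0$ is a finite field extension.

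Now decompose $K\subseteq E_0$ through its separable closure: let $K_s$ be the separable closure of $K$ in $E_0$, so that $K\subseteq K_s$ is finite separable and $K_s\subseteq E_0$ is purely inseparable of some height $h\geq 0$ (in characteristic $0$ we simply have $E_0=K_s$ and $h=0$). Combining Proposition~\ref{ud-for-fields} with Propositions~\ref{field-separable-case} and~\ref{field-inseparable-case} yields the clean identity
$$
\ud_K(E_0)=[K_s:K]\cdot p^{h},
$$
so the target inequality becomes $\id_K(\alpha)\leq [K_s:K]\cdot p^{h}\cdot m$.

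The key technical step is a Frobenius trick. By definition of the height of $K_s\subseteq E_0$, every coefficient satisfies $b_i^{p^h}\in K_s$. Raising the equation $\mdp(\alpha)=0$ to the $p^h$-th power and using additivity of Frobenius in characteristic $p$ gives
$$
\alpha^{mp^h}+b_1^{p^h}\,\alpha^{(m-1)p^h}+\ldots +b_m^{p^h}=0,
$$
a monic polynomial equation over $K_s$ of degree $mp^h$ vanishing at $\alpha$. Hence $\id_{K_s}(\alpha)\leq mp^h$. Since $K(\alpha)\subseteq K_s(\alpha)=K_s[\alpha]$ and the latter is a simple field extension of $K_s$,
$$
\id_K(\alpha)\leq [K_s(\alpha):K]=[K_s(\alpha):K_s]\cdot [K_s:K]=\id_{K_s}(\alpha)\cdot [K_s:K]\leq mp^h\cdot [K_s:K]=m\cdot \ud_K(E_0)\leq m\cdot \ud_K(L),
$$
which is exactly the desired bound.

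The main obstacle is the inseparable part of $K\subseteq L$: the naive bound $\id_K(\alpha)\leq [L:K]\id_L(\alpha)$ provided by Theorem~\ref{m-equal-mu} overshoots by a factor of $p^{e-h}$ whenever $K\subseteq L$ has a nontrivial purely inseparable piece, precisely because $[L:K]=\ud_K(L)\cdot p^{e-h}$. The Frobenius trick applied to the coefficients of the minimal polynomial of $\alpha$ over $L$ is what compresses $[L:K]$ down to $\ud_K(L)=[K_s:K]\, p^h$ and produces the sharp sub-multiplicative estimate.
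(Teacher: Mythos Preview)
Your proof is correct and follows essentially the same route as the paper's: the Frobenius trick $b_i\mapsto b_i^{p^h}$ to push the coefficients of the minimal polynomial of $\alpha$ down into the separable closure $K_s$, followed by the chain $\id_K(\alpha)\leq [K_s(\alpha):K]=[K_s:K]\cdot\id_{K_s}(\alpha)$ and the identity $\ud_K(\cdot)=[K_s:K]\,p^h$ from Propositions~\ref{field-separable-case}, \ref{field-inseparable-case}, \ref{ud-for-fields}. The one refinement you add is the preliminary passage to the finite subextension $E_0=K(b_1,\ldots,b_m)\subseteq L$, which makes the invocation of those propositions (stated for finite extensions) entirely clean; the paper instead works directly with $K_s\subset L$ after assuming $\ud_K(L)<\infty$.
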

\begin{proof}
We can assume that $\ud_K(L)$ and $\ud_L(M)$ are finite. 

Let $\alpha\in M$ and let $\mdp(T)=T^n+b_1T^{n-1}+\ldots +b_n\in L[T]$
be a minimal degree polynomial of $\alpha$ over $L$. Let $h$ be the
height of the purely inseparable field extension $K_s\subset L$, where
$K_s$ is the separable closure of $K$ in $L$.  Let $p=\charac(K)$.  If
$K$ has characteristic $0$, we understand that $p^h=1$. Then
$0=\mdp(\alpha)^{p^h} =\alpha^{np^h}+b_1^{p^h}\alpha^{(n-1)p^h}+\ldots
+b_n^{p^h}$. It follows that $\alpha$ is a root of a monic polynomial
in $K_s[T]$ of degree $p^h\id_L(\alpha)$. So we have
\begin{eqnarray*}
&\id_K(\alpha)=&\ud_K(K[\alpha])\leq \ud_K(K_s[\alpha])\leq
  \mu_K(K_s[\alpha])\leq \mu_K(K_s)\cdot\mu_{K_s}(K_s[\alpha])=\\&&
  =\ud_K(K_s)\cdot\id_{K_s}(\alpha) \leq
  \ud_K(K_s)p^h\id_L(\alpha)=\ud_K(L)\id_L(\alpha).
\end{eqnarray*}
To finish, apply Lemma~\ref{equivalences}.
\end{proof}

Though sub-multiplicative, the integral degree might not be
multiplicative, even for two simple algebraic field extensions.

\begin{example}\label{strict-inequality}
Let $p>1$ be a prime and let $K=\mbf_p(u^p_1,u^p_2)$, where $u_1,u_2$
are algebraically independent over $\mbf_p$. Set $L=K[u_1]$ and
$M=L[u_2]$. Then $K\subset L$ and $L\subset M$ are two finite field
extensions with $\ud_K(M)=p$ and
$\ud_{K}(L)\ud_{L}(M)=\id_K(u_1)\id_L(u_2)=p^2$ (see
Example~\ref{ud-not-equal-mu} and Proposition~\ref{first-prop}).
\end{example}

However, for finite separable field extensions, multiplicativity
holds.

\begin{remark}\label{mult-fields}
Let $K\subset L$ be a finite separable field extension and $L\subset
M$ be a simple algebraic field extension. Then
\begin{eqnarray*}
\ud_{K}(M)=\ud_{K}(L)\ud_{L}(M).
\end{eqnarray*}
\end{remark}
\begin{proof}
By the extended primitive element theorem, $K\subset M$ is simple.
Hence, by Proposition~\ref{field-separable-case}, $\ud_{K}(M)=[M:K]$,
$\ud_{K}(L)=[L:K]$ and $\ud_{L}(M)=[M:L]$.
\end{proof}

\section{Integral degree of projective finite ring 
extensions}\label{proj}

We return to the general hypotheses: $A\subset B$ and $B\subset C$ are
integral ring extensions of integral domains, and $K$, $L$ and $M$ are
their fields of fractions, respectively. In this section we are
interested in the integral degree of projective finite ring extensions
(by a projective, respectively free, ring extension $A\subset B$ we
understand that $B$ is a projective, respectively free, $A$-module).
We begin by recalling some well-known definitions and facts (see,
e.g., \cite[Chapter~IV, \S~2, 3]{kunz}).

\begin{reminder}\label{def-rank}
Let $A$ be a domain and let $N$ be a finitely generated $A$-module.
\begin{enumerate}
\item[$\bullet$] $N$ is a free $A$-module if it has a basis, i.e., a
  linearly independent system of generators. The {\em rank of a free
    module $N$}, $\rank_A(N)$, is defined as the cardinality of
  (indeed, any) a basis. Clearly, $N$ is free of rank $n$ if and only
  if $N\cong A^n$. If $N$ is a free $A$-module, the minimal generating
  sets are just the bases of $N$. In particular,
  $\mu_A(N)=\rank_A(N)$.
\item[$\bullet$] $N$ is a projective $A$-module if there exists an
  $A$-module $N^{\prime}$ such that $N\oplus N^{\prime}$ is free. One
  has that $N$ is projective if and only if $N$ is finitely
  presentable and locally free. The {\em rank of a projective module
    $N$ at a prime $\mfp$}, $\rank_{\mathfrak{p}}(N)$, is defined as
  the rank of the free $A_{\mathfrak{p}}$-module $N_{\mathfrak{p}}$,
  i.e.,
  $\rank_{\mathfrak{p}}(N)=\rank_{A_{\mathfrak{p}}}(N_{\mathfrak{p}})
  =\mu_{A_{\mathfrak{p}}}(N_{\mathfrak{p}})=\dim_{k(\mathfrak{p})}(N\otimes
  k(\mfp))$, where $k(\mfp)=A_{\mathfrak{p}}/\mfp A_{\mathfrak{p}}$
  stands for the residue field of $A$ at $\mfp$.
\item[$\bullet$] If $N$ is projective, then
  $\mfp\mapsto\rank_{\mathfrak{p}}(N)$ is constant (since $A$ is a
  domain, $\Spec(A)$ is connected) and is simply denoted by
  $\rank_A(N)$. In particular, on taking the prime ideal $(0)$, then
  $\rank_A(N)=\mu_K(N\otimes K)=\rank_{\mathfrak{p}}(N)$, for every
  prime ideal $\mfp$. Clearly, when $N$ is free both definitions of
  rank coincide.
\end{enumerate}
\end{reminder}

\begin{theorem}\label{proj-klsimple}
Let $A\subset B$ be a projective finite ring extension. Then
\begin{eqnarray*}
\ud_K(L)\leq \ud_A(B)\leq \rank_A(B)=[L:K].
\end{eqnarray*}
If moreover $K\subset L$ is simple, then
\begin{eqnarray*}
\ud_K(L)=\ud_A(B)=\rank_A(B)=[L:K].
\end{eqnarray*}
\end{theorem}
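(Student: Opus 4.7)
The plan is to establish the chain
\[
\ud_K(L)\leq \ud_A(B)\leq \rank_A(B)=[L:K],
\]
and then to collapse it to equalities throughout in the simple case. The leftmost inequality is already Proposition~\ref{first-prop}$(e)$ applied at the zero prime. The rightmost equality I would handle as follows: since $B$ is projective, by Reminder~\ref{def-rank} the rank function on $\Spec(A)$ is constant, and evaluating at $(0)$ together with Proposition~\ref{first-prop}$(d)$, which identifies $B\otimes_A K$ with $L$, gives $\rank_A(B)=\dim_K L=[L:K]$.

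The substantive inequality is $\ud_A(B)\leq \rank_A(B)$. My approach is to localize: by Proposition~\ref{local-invariant} it suffices to bound $\ud_{A_{\mathfrak{m}}}(B_{\mathfrak{m}})$ for each $\mfm\in\Max(A)$. Setting $n=\rank_A(B)$, projectivity makes each $B_{\mathfrak{m}}$ a free $A_{\mathfrak{m}}$-module of rank $n$. For any $b\in B_{\mathfrak{m}}$, multiplication by $b$ is represented in some basis by a matrix $M_b\in M_n(A_{\mathfrak{m}})$, and the Cayley--Hamilton theorem produces a monic polynomial
\[
\chi(T)=\det(T\mbox{\rm I}-M_b)\in A_{\mathfrak{m}}[T]
\]
of degree $n$ with $\chi(M_b)=0$; evaluating this vanishing endomorphism at $1\in B_{\mathfrak{m}}$ yields $\chi(b)=0$, so $\id_{A_{\mathfrak{m}}}(b)\leq n$. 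Taking suprema over $b$ and $\mfm$ gives $\ud_A(B)\leq n$. This is essentially the determinantal trick already displayed in Remark~\ref{det-trick}, specialised to a module of rank exactly $n$ rather than to the ambient product of two generating sets.

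For the second assertion, assume in addition that $K\subset L$ is simple. Proposition~\ref{field-separable-case} then gives $\ud_K(L)=[L:K]$, and the sandwich $\ud_K(L)\leq\ud_A(B)\leq[L:K]$ forces all four quantities to coincide.

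I do not expect any real obstacle: apart from the Cayley--Hamilton step, every ingredient is quoted verbatim from earlier in the paper. The one subtlety that deserves care is the passage from ``$\chi(M_b)=0$ as an endomorphism of $B_{\mathfrak{m}}$'' to ``$\chi(b)=0$ as an element of $B_{\mathfrak{m}}$'', which is obtained by applying the vanishing endomorphism $\chi(M_b)$ to the unit $1\in B_{\mathfrak{m}}$ and using that $B_{\mathfrak{m}}$ is a domain.
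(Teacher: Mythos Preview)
Your proof is correct and follows essentially the same route as the paper: localize via Proposition~\ref{local-invariant}, use that $B_{\mathfrak{m}}$ is free of rank $\rank_A(B)$ over $A_{\mathfrak{m}}$, and identify $\rank_A(B)$ with $[L:K]$ at the zero prime. The only cosmetic difference is that where you re-run the Cayley--Hamilton argument to obtain $\ud_{A_{\mathfrak{m}}}(B_{\mathfrak{m}})\le n$, the paper simply cites Proposition~\ref{first-prop}$(a)$ (i.e.\ $\ud\le\mu$) together with $\mu_{A_{\mathfrak{m}}}(B_{\mathfrak{m}})=\rank_{A_{\mathfrak{m}}}(B_{\mathfrak{m}})$ for free modules; your explicit argument is of course exactly what underlies that proposition. (Incidentally, the domain hypothesis you flag at the end is not actually needed: $\chi(M_b)$ is multiplication by $\chi(b)$, so applying it to $1$ already gives $\chi(b)=0$.)
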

\begin{proof}
By Proposition~\ref{local-invariant}, there exists a maximal ideal
$\mfm$ of $A$ such that
$\ud_A(B)=\ud_{A_\mathfrak{m}}(B_{\mathfrak{m}})$. By
Proposition~\ref{first-prop} and using that $B_{\mathfrak{m}}$ is
$A_{\mathfrak{m}}$-free and $B$ is $A$-projective, then
\begin{eqnarray*}
\ud_K(L)\leq \ud_A(B)=\ud_{A_\mathfrak{m}}(B_{\mathfrak{m}})\leq
\mu_{A_\mathfrak{m}}(B_{\mathfrak{m}})=\rank_{A_\mathfrak{m}}(B_{\mathfrak{m}})
=\rank_A(B)=\mu_K(B\otimes K)=[L:K].
\end{eqnarray*}
To finish, recall that if $K\subset L$ is simple, then
$\ud_K(L)=[L:K]$ (see Proposition~\ref{first-prop} or
~\ref{field-separable-case}).
\end{proof}

The next result characterizes finite ring extensions of maximal and
minimal integral degree at the same time.

\begin{corollary}\label{free-klsimple}
Let $A\subset B$ be a finite ring extension. 
\begin{itemize}
\item[$(a)$] $A\subset B$ is free if and only if $[L:K]=\mu_A(B)$.
\item[$(b)$] $A\subset B$ is free and $K\subset L$ is simple if and
  only if $\ud_K(L)=\ud_A(B)=\mu_A(B)$.
\end{itemize}
\end{corollary}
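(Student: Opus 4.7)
The plan is to reduce both parts to the numerical relationship between a minimal generating set of $B$ over $A$ and a $K$-basis of $L$, together with the inequalities recorded in Notation~\ref{diagram} and the multiplicative result of Theorem~\ref{proj-klsimple}.

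For part $(a)$, the forward direction is immediate: if $A\subset B$ is free, then $\mu_A(B)=\rank_A(B)$, and since $(A\setminus\{0\})^{-1}B=L$ by Proposition~\ref{first-prop}$(d)$, tensoring a free basis with $K$ produces a $K$-basis of $L$, so $[L:K]=\rank_A(B)=\mu_A(B)$. For the converse, the key observation is that any minimal $A$-generating set $b_1,\ldots,b_n$ of $B$ (with $n=\mu_A(B)$) descends via $B\otimes_A K=L$ to a $K$-generating set of $L$ of cardinality $n$. Under the hypothesis $n=[L:K]=\dim_K L$, those $n$ elements must be a $K$-basis, hence $K$-linearly independent; since $A\hookrightarrow K$, they are also $A$-linearly independent, so $b_1,\ldots,b_n$ is an $A$-basis of $B$ and the extension is free.

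For part $(b)$, the forward direction follows by combining $(a)$ with Theorem~\ref{proj-klsimple}: if $A\subset B$ is free (hence projective) and $K\subset L$ is simple, then that theorem gives $\ud_K(L)=\ud_A(B)=[L:K]$, while $(a)$ gives $[L:K]=\mu_A(B)$. Conversely, suppose $\ud_K(L)=\ud_A(B)=\mu_A(B)$. The chain of inequalities in Notation~\ref{diagram},
\begin{eqnarray*}
\ud_K(L)\leq [L:K]\leq \mu_A(B),
\end{eqnarray*}
then forces $[L:K]=\mu_A(B)$ and $\ud_K(L)=[L:K]$. The first equality, via $(a)$, yields that $A\subset B$ is free; the second, via Proposition~\ref{field-separable-case} (applicable because $K\subset L$ is finite, being obtained from the finite extension $A\subset B$), yields that $K\subset L$ is simple.

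There is no real obstacle here: the only slightly subtle point is the descent argument in the converse direction of $(a)$, namely checking that $K$-linear independence of the images $b_i\otimes 1$ forces $A$-linear independence of the $b_i$, which is automatic since $A$ embeds in $K$. Everything else is bookkeeping with the diagram in Notation~\ref{diagram} and the results already established.
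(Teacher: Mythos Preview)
Your proof is correct and follows essentially the same route as the paper's: the converse of $(a)$ is handled identically (a minimal $A$-generating set localizes to a $K$-basis of $L$, hence is $A$-linearly independent), and $(b)$ is reduced to $(a)$ together with Proposition~\ref{field-separable-case} via the inequalities in Notation~\ref{diagram}. The only cosmetic difference is that for the forward direction of $(b)$ you invoke Theorem~\ref{proj-klsimple} directly, whereas the paper simply squeezes using $\ud_K(L)\leq\ud_A(B)\leq\mu_A(B)$ once $\ud_K(L)=\mu_A(B)$ is known.
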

\begin{proof}
If $A\subset B$ is free, then $\mu_A(B)=\rank_A(B)=[L:K]$ (see
Reminder~\ref{def-rank} and
Theorem~\ref{proj-klsimple}). Reciprocally, suppose that
$[L:K]=\mu_A(B)$. Set $\mu_A(B)=n$ and let $u_1,\ldots ,u_n$ be a
system of generators of the $A$-module $B$. Thus, $u_1,\ldots ,u_n$ is
a system of generators of the $K$-module $L$, where $n=[L:K]$ (recall
that, if $S=A\setminus\{0\}$, then $K=S^{-1}A$ and $L=S^{-1}B$, cf.
Proposition~\ref{first-prop}). Hence, they are a $K$-basis of $L$, so
$K$-linearly independent. In particular, $u_1,\ldots ,u_n$ are
$A$-linearly independent. Since they also generate $B$, one concludes
that $u_1,\ldots ,u_n$ is an $A$-basis of $B$ and that $B$ is a free
$A$-module. This shows $(a)$.  Since $\ud_K(L)\leq
\ud_A(B)\leq\mu_A(B)$ and $\ud_K(L)\leq [L:K]\leq\mu_A(B)$ (see
Notation~\ref{diagram}), part $(b)$ follows from part $(a)$ and
Proposition~\ref{field-separable-case}.
\end{proof}

\begin{corollary}\label{proj-simple}
Let $A\subset B=A[b]$ be a projective simple integral ring
extension. Then $A\subset B$ is free and $1,b,\ldots ,b^{n-1}$ is a
basis, where
\begin{eqnarray*}
n=\id_A(b)=\ud_A(B)=\mu_A(B)\espai\mbox{ and
}\espai n=\id_K(b/1)=\ud_K(L)=[L:K].
\end{eqnarray*}
\end{corollary}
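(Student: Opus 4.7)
My strategy is to package the corollary as a combination of three earlier results: Proposition~\ref{first-prop}(b), Theorem~\ref{proj-klsimple}, and Corollary~\ref{free-klsimple}(b). First I would observe that since $B=A[b]$, on localizing at $S=A\setminus\{0\}$ one gets $L=K[b/1]$, so $K\subset L$ is a simple algebraic field extension. Applying Theorem~\ref{proj-klsimple} to the projective finite extension $A\subset B$ together with this simplicity yields $\ud_K(L)=\ud_A(B)=\rank_A(B)=[L:K]$. On the other hand, since $A\subset B=A[b]$ is simple as a ring extension, Proposition~\ref{first-prop}(b) gives $\id_A(b)=\ud_A(B)=\mu_A(B)$. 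Combining, I obtain a single integer $n$ with $n=\id_A(b)=\ud_A(B)=\mu_A(B)=\rank_A(B)=[L:K]$. The chain $\ud_K(L)=\ud_A(B)=\mu_A(B)$ then lets me invoke Corollary~\ref{free-klsimple}(b) to deduce that $A\subset B$ is free.

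Next I would pin down the explicit basis. Because $b$ satisfies a monic polynomial over $A$ of degree $n=\id_A(b)$, the powers $1,b,\ldots,b^{n-1}$ span $A[b]=B$ as an $A$-module. To see they are $A$-linearly independent, I would localize at $S=A\setminus\{0\}$: by Proposition~\ref{first-prop}(d) their images form a $K$-generating set of $L$ of cardinality $n=[L:K]$, hence a $K$-basis, and in particular are $K$-linearly independent. Since $A\hookrightarrow K$, any hypothetical $A$-linear relation among the $b^i$ would survive to a $K$-linear relation, which is impossible. Therefore $1,b,\ldots,b^{n-1}$ is an $A$-basis of $B$.

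Finally, applying Proposition~\ref{first-prop}(b) to the simple field extension $K\subset L=K[b/1]$ itself delivers $\id_K(b/1)=\ud_K(L)=\mu_K(L)=[L:K]=n$, closing all the equalities in the statement.

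I do not anticipate a genuine obstacle here: the corollary is essentially bookkeeping on top of the preceding machinery. The one mildly non-formal point is that one must verify the particular sequence $1,b,\ldots,b^{n-1}$ is a basis rather than merely assert the existence of some basis; the localization argument above handles this cleanly, and alternatively one could argue that any $A$-module surjection $A^n\twoheadrightarrow B$ onto a free module of rank $n$ is an isomorphism by the determinantal trick.
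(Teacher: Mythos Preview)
Your proposal is correct and follows essentially the same route as the paper: localize to see $L=K[b/1]$ is simple, combine Proposition~\ref{first-prop}(b) with Theorem~\ref{proj-klsimple} to identify all the invariants with a single $n$, and then invoke Corollary~\ref{free-klsimple} for freeness. The only cosmetic difference is in the basis step: the paper observes that $\{1,b,\ldots,b^{n-1}\}$ is a minimal generating set of a free module and cites Reminder~\ref{def-rank} (minimal generating sets of free modules over a domain are bases), whereas you give a direct localization argument for $A$-linear independence; both are equally valid and equally short.
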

\begin{proof}
If $S=A\setminus\{0\}$, then $K=S^{-1}A$ and
$L=S^{-1}B=S^{-1}A[b]=K[b/1]$. Thus $K\subset L=K[b/1]$ is a simple
algebraic field extension.  By Proposition~\ref{first-prop},
$\id_A(b)=\ud_A(B)=\mu_A(B)=n$, say, and
$\id_K(b/1)=\ud_K(L)=[L:K]=m$, say. By Theorem~\ref{proj-klsimple} and
Corollary~\ref{free-klsimple}, $n=m$ and $A\subset B$ is free (see
Corollary~\ref{free-klsimple}).  Since $\{1,b,\ldots,b^{n-1}\}$ is a
minimal system of generators of $B=A[b]$, then it is a basis of the
free $A$-module $B$ (see Reminder~\ref{def-rank}).
\end{proof}

Sub-multiplicativity holds in the case of projective finite ring
extensions $A\subset B$ with $K\subset L$ being simple.

\begin{corollary}\label{sm-projective}
Let $A\subset B$ and $B\subset C$ be two finite ring extensions. If
$A\subset B$ is projective and $K\subset L$ is simple, then
\begin{eqnarray*}
\ud_A(C)\leq \ud_A(B)\ud_B(C).
\end{eqnarray*}
If moreover, $K\subset L$ is separable, $B\subset C$ is projective and
$L\subset M$ is simple, then
\begin{eqnarray*}
\ud_A(C)=\ud_A(B)\ud_B(C).
\end{eqnarray*}
\end{corollary}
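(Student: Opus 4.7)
The plan is to reduce the first inequality to a pointwise estimate via Lemma~\ref{equivalences}, and then to derive the equality in the second part by a direct appeal to Theorem~\ref{proj-klsimple} applied to the composite extension $A\subset C$.

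For the inequality $\ud_A(C)\leq \ud_A(B)\ud_B(C)$, by Lemma~\ref{equivalences}(iii)$\Rightarrow$(i) it suffices to prove that $\id_A(\alpha)\leq \ud_A(B)\id_B(\alpha)$ for every $\alpha\in C$. First, Theorem~\ref{proj-klsimple} identifies $\ud_A(B)=[L:K]=\rank_A(B)$, so the target bound becomes $\id_A(\alpha)\leq [L:K]\id_B(\alpha)$. To reach it, fix $\alpha\in C$ and invoke Proposition~\ref{local-invariant} to produce a maximal ideal $\mfm\in\Max(A)$ with $\id_A(\alpha)=\id_{A_{\mathfrak{m}}}(\alpha/1)$. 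Since $B$ is a finitely generated projective $A$-module and $A_{\mathfrak{m}}$ is local, $B_{\mathfrak{m}}$ is $A_{\mathfrak{m}}$-free of rank $[L:K]$, so Theorem~\ref{m-equal-mu} applied to $A_{\mathfrak{m}}\subset B_{\mathfrak{m}}\subset C_{\mathfrak{m}}$ yields
\begin{eqnarray*}
\id_{A_{\mathfrak{m}}}(\alpha/1)\leq \mu_{A_{\mathfrak{m}}}(B_{\mathfrak{m}})\id_{B_{\mathfrak{m}}}(\alpha/1)=[L:K]\id_{B_{\mathfrak{m}}}(\alpha/1).
\end{eqnarray*}
Combining with the trivial bound $\id_{B_{\mathfrak{m}}}(\alpha/1)\leq \id_B(\alpha)$ (localizing an integral equation over $B$ gives one of the same degree over $B_{\mathfrak{m}}$, so the minimum can only drop) finishes the inequality.

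For the additional equality statement, the strategy is to verify that the hypotheses of Theorem~\ref{proj-klsimple} apply not only to $A\subset B$ and $B\subset C$ separately, but also to the composite $A\subset C$. Composition of projective finite ring extensions is again projective and finite, so $A\subset C$ is projective finite. Moreover, since $K\subset L$ is separable (hence simple) and $L\subset M$ is simple, the extended primitive element theorem cited in Remark~\ref{mult-fields} yields that $K\subset M$ is simple. Applying Theorem~\ref{proj-klsimple} three times gives $\ud_A(B)=[L:K]$, $\ud_B(C)=[M:L]$, and $\ud_A(C)=[M:K]$, and multiplicativity of field degrees $[M:K]=[L:K][M:L]$ closes the argument.

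The main obstacle is the first step: bridging a global bound on $\id_A(\alpha)$ with the fact that projectivity only guarantees freeness locally. The fix is to localize at a well-chosen maximal ideal where $\id_A(\alpha)$ is attained (Proposition~\ref{local-invariant}) and where $B_{\mathfrak{m}}$ becomes $A_{\mathfrak{m}}$-free of rank $[L:K]$, so that Theorem~\ref{m-equal-mu} can be invoked with the sharp constant $[L:K]$ rather than the potentially larger $\mu_A(B)$. Once this localization trick is in place, the rest is a straightforward chain of inequalities, and the equality part follows by reusing Theorem~\ref{proj-klsimple} on the composite.
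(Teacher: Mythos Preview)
Your proof is correct and takes essentially the same approach as the paper: localize at a maximal ideal so that $B$ becomes free of rank $[L:K]$ over $A$, then invoke Theorem~\ref{m-equal-mu} (packaged in the paper as Corollary~\ref{sm-mid}) together with Proposition~\ref{first-prop}(c). The only cosmetic difference is that you localize element-by-element via the first clause of Proposition~\ref{local-invariant} and conclude through Lemma~\ref{equivalences}, whereas the paper localizes once at an $\mfm$ where $\ud_A(C)$ is attained; the equality part is handled identically in both.
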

\begin{proof}
By Proposition~\ref{local-invariant}, there exists a maximal ideal
$\mfm$ of $A$ such that
$\ud_A(C)=\ud_{A_\mathfrak{m}}(C_{\mathfrak{m}})$.  Since $A\subset B$
is projective, then $A_{\mathfrak{m}}\subset B_{\mathfrak{m}}$ is free
with fields of fractions $Q(A_{\mathfrak{m}})=Q(A)=K$ and
$Q(B_{\mathfrak{m}})=Q(B)=L$, respectively, where $K\subset L$ is
simple by hypothesis. By Corollary~\ref{free-klsimple},
$\ud_{A_\mathfrak{m}}(B_{\mathfrak{m}})=\mu_{A_\mathfrak{m}}(B_{\mathfrak{m}})$. Therefore,
by Corollary~\ref{sm-mid} and Proposition~\ref{first-prop},
\begin{eqnarray*}
\ud_A(C)=\ud_{A_\mathfrak{m}}(C_{\mathfrak{m}})\leq
\ud_{A_\mathfrak{m}}(B_{\mathfrak{m}})
\ud_{B_\mathfrak{m}}(C_{\mathfrak{m}})\leq \ud_A(B)\ud_B(C).
\end{eqnarray*}
As for the second part of the statement, by hypothesis, $A\subset C$
is projective and $K\subset M$ is simple (again, we use the extended
primitive element theorem). By Theorem~\ref{proj-klsimple},
$\ud_K(L)=\ud_A(B)$, $\ud_L(M)=\ud_B(C)$ and $\ud_K(M)=\ud_A(C)$. By
Remark~\ref{mult-fields}, $\ud_K(M)=\ud_K(L)\ud_L(M)$, so
$\ud_A(C)=\ud_A(B)\ud_B(C)$.
\end{proof}

Now we can complement Example~\ref{ring-of-invariants}. Let $A\subset
B$ a ring extension. Let $G$ be a finite group acting as $A$-algebra
automorphisms on $B$. Define $B^G$ as the subring $B^G=\{b\in B\mid
\sigma(b)=b,\mbox{ for all }\sigma\in G\}$. It is said that $A\subset
B$ is a {\em Galois extension with group $G$} if $B^G=A$, and for any
maximal ideal $\mfn$ in $B$ and any $\sigma\in G\setminus \{1\}$,
there is a $b\in B$ such that $\sigma(b)-b\not\in\mfn$ (see, e.g.,
\cite[Definition~4.2.1]{jly}).

\begin{corollary}\label{galois}
Let $G$ be a finite group and let $A\subset B$ be a Galois ring
extension with group $G$. Then $A\subset B$ is a projective finite
ring extension and $\ud_K(L)=\ud_A(B)=[L:K]=\mro(G)$.
\end{corollary}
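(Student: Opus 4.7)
The plan is to invoke two classical facts about Galois ring extensions and then appeal directly to Theorem~\ref{proj-klsimple}. First, the standard structure theorem for Galois extensions of commutative rings (Chase--Harrison--Rosenberg; see e.g.~\cite[Ch.~4]{jly}) guarantees that any Galois extension $A\subset B$ with finite group $G$ is a finitely generated projective $A$-module of constant rank $\mro(G)$. This delivers the ``projective finite'' part of the statement and pins down the rank.

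Next I would extend the $G$-action on $B$ to $L=Q(B)$ and reduce the problem to the field case. Taking $S=A\setminus\{0\}\subset A=B^G$, a routine clearing-denominators argument (using that $\prod_{\sigma\in G}\sigma(s)\in B^G=A$ for every $s\in S$) shows $(S^{-1}B)^G=S^{-1}(B^G)$, and hence $L^G=K$ by Proposition~\ref{first-prop}$(d)$. The second Galois axiom forces $G$ to act faithfully on $B$, and since $B\hookrightarrow L$, the induced action on $L$ is again faithful. Artin's theorem on invariants of a finite group acting faithfully on a field then yields that $K\subset L$ is a finite Galois field extension with group $G$; in particular it is separable of degree $\mro(G)$, and hence simple by the primitive element theorem.

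Finally, having verified that $A\subset B$ is projective and finite and that $K\subset L$ is simple, Theorem~\ref{proj-klsimple} applies directly to give
$$\ud_K(L)=\ud_A(B)=\rank_A(B)=[L:K]=\mro(G),$$
as required. (As a sanity check, this is consistent with Example~\ref{ring-of-invariants}, which gave the a priori bound $\ud_A(B)\leq\mro(G)$; here the Galois hypothesis forces equality.)

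The main obstacle is not mathematical novelty but the correct packaging of the external Galois-theoretic inputs: one needs the fact that a commutative-ring Galois extension is automatically projective of rank $\mro(G)$, and that $G$ descends faithfully to the fraction field with the same invariant subfield. Once these inputs are granted, the result is an immediate application of Theorem~\ref{proj-klsimple}.
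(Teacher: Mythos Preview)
Your proposal is correct and follows essentially the same approach as the paper: establish that $A\subset B$ is projective and finite and that $K\subset L$ is simple (via its being a Galois, hence separable, field extension with group $G$), then invoke Theorem~\ref{proj-klsimple}. The only difference is cosmetic: the paper cites \cite[Lemma~4.2.5]{jly} directly for the fact that $K\subset L$ is Galois with group $G$, whereas you unpack this via the localization argument and Artin's theorem (note, incidentally, that since $S\subset A=B^G$ the denominators are already $G$-invariant, so the norm trick you mention is not actually needed here).
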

\begin{proof}
Since $A\subset B$ is a Galois ring extension of domains with group
$G$, then $A\subset B$ is a projective finite ring extension,
$K\subset L$ is a Galois field extension with group $G$ and
$[L:K]=\mro(G)$ (see, e.g., \cite[Subsequent Remark to
  Definition~4.2.1 and Lemma~4.2.5]{jly}). In particular, $K\subset L$
is separable and hence simple. By Theorem~\ref{proj-klsimple},
$\ud_K(L)=\ud_A(B)=[L:K]=\mro(G)$.
\end{proof}

Next we calculate the integral degree when $A$ is a Dedekind domain
and $K\subset L$ is simple, for example, when $B$ is the ring of
integers of an algebraic number field (see Remark~\ref{max-int-deg}).

\begin{corollary}\label{dedekind}
Let $A\subset B$ be a finite ring extension. Suppose that $A$ is
Dedekind and that $K\subset L$ is simple. Then $A\subset B$ is
projective and $\ud_K(L)=\ud_A(B)=\rank_A(B)=[L:K]$. If moreover $A$
is a principal ideal domain, then $A\subset B$ is free and
$\ud_K(L)=\ud_A(B)=\mu_A(B)$.
\end{corollary}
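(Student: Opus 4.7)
The plan is to reduce this to Theorem~\ref{proj-klsimple} and Corollary~\ref{free-klsimple}(b) by using the structure theory of finitely generated modules over a Dedekind (resp.\ principal ideal) domain. Since $A\subset B$ is a finite integral extension of domains, $B$ is a finitely generated $A$-module and is torsion-free as an $A$-module (because $B$ is a domain containing $A$, so nonzero elements of $A$ are nonzero in $B$ and act injectively). This torsion-freeness is the only substantive module-theoretic input required.

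First I would treat the Dedekind case. By the standard structure theorem for finitely generated torsion-free modules over a Dedekind domain, every such module is projective (indeed, isomorphic to a direct sum of fractional ideals); hence $B$ is a projective $A$-module, so $A\subset B$ is a projective finite ring extension. Since $K\subset L$ is simple by hypothesis, Theorem~\ref{proj-klsimple} applies directly and yields
\begin{eqnarray*}
\ud_K(L)=\ud_A(B)=\rank_A(B)=[L:K].
\end{eqnarray*}

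For the principal ideal domain case, the same finitely generated torsion-free $A$-module $B$ is in fact \emph{free} by the structure theorem for finitely generated modules over a PID. Hence $\mu_A(B)=\rank_A(B)$. Since $A\subset B$ is finite (hence projective) and $K\subset L$ is simple, we may invoke Corollary~\ref{free-klsimple}(b), which characterizes freeness together with simplicity of $K\subset L$ as precisely the situation $\ud_K(L)=\ud_A(B)=\mu_A(B)$. Combining with the first part then gives the full chain $\ud_K(L)=\ud_A(B)=\mu_A(B)=\rank_A(B)=[L:K]$.

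There is no real obstacle here: the proof is essentially a citation of two previously established results in the paper, with the module-theoretic facts (projectivity of finitely generated torsion-free modules over Dedekind domains, freeness over PIDs) being classical. The only point worth double-checking is that $B$ is genuinely torsion-free over $A$, which is immediate from both $A$ and $B$ being domains with $A\hookrightarrow B$.
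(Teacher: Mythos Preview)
Your proof is correct and follows essentially the same route as the paper's: both argue that $B$ is a finitely generated torsion-free $A$-module, invoke the structure theory over a Dedekind domain (resp.\ PID) to get projectivity (resp.\ freeness), and then cite Theorem~\ref{proj-klsimple} and Corollary~\ref{free-klsimple}. The only difference is cosmetic---you spell out the torsion-freeness argument and the structure theorems a bit more explicitly than the paper does.
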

\begin{proof}
From the structure theorem of finitely generated modules over a
Dedekind domain, and since $B$ is a torsion-free $A$-module, it
follows that $A\subset B$ is a projective finite ring extension (see,
e.g., \cite[Corollary to Theorem~1.32, page~30]{narkiewicz}). Since
$A\subset B$ is projective finite and $K\subset L$ is simple, then
$\ud_K(L)=\ud_A(B)=\rank_A(B)=[L:K]$ (see
Theorem~\ref{proj-klsimple}). Finally, if $A$ is a principal ideal
domain, then $A\subset B$ must be free and we apply
Corollary~\ref{free-klsimple}.
\end{proof}

\section{Integrally closed base ring}\label{icd}

As always, $A\subset B$ and $B\subset C$ are integral ring extensions
of domains, and $K$, $L$ and $M$ are their fields of fractions,
respectively. Recall that $\overline{A}$ denotes the integral closure
of $A$ in $K$. In this section we focus our attention on the case
where $A$ is integrally closed. We begin by noting that, in such a
situation, $A\subset B$ has minimal integral degree.

\begin{proposition}\label{kronecker}
Let $A\subset B$ be an integral ring extension. Then, for every $b\in
B$, $\id_K(b)=\id_{\overline{A}}(b)$. In particular, if $A$ is
integrally closed, then $\ud_K(L)=\ud_A(B)$.
\end{proposition}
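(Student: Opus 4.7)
The claim $\id_K(b) = \id_{\overline{A}}(b)$ is the classical fact that the minimal polynomial of an integral element over $K$ has coefficients in the integral closure $\overline{A}$. The plan is to prove it via conjugates in an algebraic closure, using that $\overline{A}$ is integrally closed in $K$ by construction.

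First, since $\overline{A} \subset K$, any monic integral equation for $b$ over $\overline{A}$ is in particular a monic polynomial equation over $K$, so $\id_K(b) \leq \id_{\overline{A}}(b)$ is immediate. For the reverse inequality, let $m(T) \in K[T]$ be a minimal degree monic polynomial with $m(b)=0$; note that $b$ is algebraic over $K$ because $b$ is integral over $A \subset K$, so $m(T)$ exists and equals the minimal polynomial of $b$ over $K$, of degree $n = \id_K(b)$. Work inside a fixed algebraic closure $\overline{K}$ of $K$ and factor
\begin{eqnarray*}
m(T) = \prod_{i=1}^n (T - b_i),
\end{eqnarray*}
where $b = b_1, b_2, \ldots, b_n$ are the roots of $m(T)$ in $\overline{K}$.

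The key observation is that each conjugate $b_i$ is integral over $A$. Indeed, $b$ satisfies some monic equation $f(T) \in A[T]$, and since $m(T)$ is the minimal polynomial of $b$ over $K$, $m(T)$ divides $f(T)$ in $K[T]$; hence $f(b_i) = 0$ for every $i$, showing $b_i$ is integral over $A$. The coefficients of $m(T)$ are, up to sign, the elementary symmetric polynomials in $b_1, \ldots, b_n$, and therefore lie in the integral closure of $A$ (inside $\overline{K}$), as the integral elements form a ring. But these coefficients also lie in $K$, and by definition $\overline{A}$ is the set of elements of $K$ integral over $A$, i.e., $\overline{A}$ is integrally closed in $K$. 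Hence $m(T) \in \overline{A}[T]$, giving $\id_{\overline{A}}(b) \leq \deg m(T) = \id_K(b)$.

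For the ``in particular'' part, suppose $A$ is integrally closed, so $\overline{A} = A$ and the previous identity reads $\id_K(b) = \id_A(b)$ for every $b \in B$. Taking the supremum over $b \in B$,
\begin{eqnarray*}
\ud_A(B) = \sup_{b \in B} \id_A(b) = \sup_{b \in B} \id_K(b) \leq \ud_K(L),
\end{eqnarray*}
and the reverse inequality $\ud_K(L) \leq \ud_A(B)$ is already known from Proposition~\ref{first-prop}$(e)$. The only subtle point in the whole argument is checking that the conjugates $b_i$ remain integral over $A$; everything else is routine once one remembers that $\overline{A}$ is integrally closed in $K$.
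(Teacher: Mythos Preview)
Your proof is correct and follows essentially the same route as the paper: one inequality is trivial from $\overline{A}\subset K$, and the other comes from the classical fact that the minimal polynomial of $b$ over $K$ already lies in $\overline{A}[T]$. The only difference is that the paper simply cites this fact (Bourbaki, Commutative Algebra, V, \S1.3, Corollary to Proposition~11), whereas you unpack its standard proof via conjugates in an algebraic closure and elementary symmetric functions; the ``in particular'' part is handled identically in both.
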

\begin{proof}
Since $K\supset \overline{A}$, $\id_K(b)\leq
\id_{\overline{A}}(b)$. On the other hand, it is well-known that the
minimal polynomial of $b$ over $K$ has coefficients in $\overline{A}$
(see, e.g., \cite[Chapter~V, \S~1.3, Corollary to
  Proposition~11]{bourbaki-ac}), which forces
$\id_{\overline{A}}(b)\leq \id_K(b)$. So $\id_K(b)=\id_{\overline{A}}(b)$.

Suppose now that $A$ is integrally closed. Then, for every $b\in B$,
$\id_A(b)=\id_{\overline{A}}(b)=\id_K(b)\leq \ud_K(L)$. Thus
$\ud_A(B)\leq \ud_K(L)$. The equality follows from
Proposition~\ref{first-prop}.
\end{proof}

Certainly, $\id_{\overline{A}}(b)$ may not be equal to $\id_A(b)$, as
the next example shows.

\begin{example}
Let $A=\mbz[\sqrt{-3}]$. Then $K=Q(A)=\mbq(\sqrt{-3})$. Let
$b=(1+\sqrt{-3})/2\in K$. Clearly, $b$ is integral over $A$, and the
minimal polynomial of $b$ over $A$ is $T^2-T+1$. Thus $\id_A(b)=2$,
whereas $\id_K(b)=1$.
\end{example}

Recall that a simple integral ring extension $B=A[b]$ over an
integrally closed domain $A$ is free. Indeed, as said above, the
minimal polynomial $p(T)$ of $b$ over $K$ has coefficients in
$A$. Therefore $1,b,\ldots ,b^{n-1}$ is a set of generators of the
$A$-module $A[b]$ (where $n=\deg p(T)$). Moreover, since they are
linearly independent over $K$, they are also linearly independent over
$A$. The next result, which is a rephrasing of this
fact, is obtained as a direct consequence of
Proposition~\ref{kronecker}.

\begin{corollary}\label{rephrasing}
Let $A\subset B$ be a finite ring extension. Suppose that $A$ is an
integrally closed domain. Then, $\ud_A(B)=\mu_A(B)$ is equivalent to
$A\subset B$ free and $K\subset L$ simple. In particular, if $A\subset
B$ is simple and $A$ is integrally closed, then $A\subset B$ is free.
\end{corollary}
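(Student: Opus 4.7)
The plan is to assemble the equivalence by combining Proposition~\ref{kronecker} (which collapses $\ud_A(B)$ to $\ud_K(L)$ when $A$ is integrally closed) with the characterisations of maximal integral degree already recorded in Proposition~\ref{field-separable-case} and Corollary~\ref{free-klsimple}. There is no genuine obstacle here; the work is essentially bookkeeping along the diagram of Notation~\ref{diagram}.

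First I would observe that since $B=\langle b_1,\ldots,b_r\rangle_A$ with $r=\mu_A(B)$, then (via localisation at $S=A\setminus\{0\}$) $L=Kb_1+\cdots+Kb_r$, so $[L:K]\leq \mu_A(B)$. Combined with Proposition~\ref{first-prop}, this yields the chain
\begin{eqnarray*}
\ud_K(L)\leq [L:K]\leq \mu_A(B)\espai\mbox{and}\espai \ud_K(L)\leq \ud_A(B)\leq \mu_A(B).
\end{eqnarray*}
Since $A$ is integrally closed, Proposition~\ref{kronecker} gives $\ud_A(B)=\ud_K(L)$. Hence the hypothesis $\ud_A(B)=\mu_A(B)$ is equivalent to $\ud_K(L)=\mu_A(B)$, which, in view of the chain above, forces the two intermediate equalities $\ud_K(L)=[L:K]$ and $[L:K]=\mu_A(B)$ simultaneously.

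Then I would invoke Proposition~\ref{field-separable-case} on the first equality to conclude that $K\subset L$ is simple, and Corollary~\ref{free-klsimple}(a) on the second to conclude that $A\subset B$ is free. Conversely, assuming $A\subset B$ free and $K\subset L$ simple, Corollary~\ref{free-klsimple}(b) delivers $\ud_K(L)=\ud_A(B)=\mu_A(B)$ immediately, closing the equivalence.

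Finally, for the ``in particular'' clause, suppose $A\subset B=A[b]$ is simple with $A$ integrally closed. Proposition~\ref{first-prop}(b) yields $\ud_A(B)=\mu_A(B)$, so the equivalence just proved forces $A\subset B$ to be free (and $K\subset L$ to be simple, which is automatic here). Alternatively, Corollary~\ref{proj-simple} already contains this statement once projectivity is replaced by the direct observation that $1,b,\ldots,b^{n-1}$ is an $A$-basis of $A[b]$ whenever $n=\id_A(b)=[L:K]$, the latter being guaranteed by Proposition~\ref{kronecker}.
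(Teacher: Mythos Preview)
Your proof is correct and follows essentially the same route as the paper: use Proposition~\ref{kronecker} to identify $\ud_A(B)$ with $\ud_K(L)$, then read off the equivalence from the diagram of Notation~\ref{diagram} together with Corollary~\ref{free-klsimple} (and Proposition~\ref{first-prop}(b) for the ``in particular''). The only difference is cosmetic: you unpack the two equalities $\ud_K(L)=[L:K]$ and $[L:K]=\mu_A(B)$ separately via Proposition~\ref{field-separable-case} and Corollary~\ref{free-klsimple}(a), whereas the paper invokes Corollary~\ref{free-klsimple} once for both directions.
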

\begin{proof}
By Proposition~\ref{kronecker}, one has $\ud_K(L)=\ud_A(B)$. Thus,
$\ud_A(B)=\mu_A(B)$ is equivalent to $\ud_K(L)=[L:K]=\mu_A(B)$ (see
Notation~\ref{diagram}). The latter is equivalent to $A\subset B$ free
and $K\subset L$ simple (see Corollary~\ref{free-klsimple}). To finish
apply Proposition~\ref{first-prop}.
\end{proof}

This corollary suggests how to find a finite integral extension
$A\subset B$ with $\ud_K(L)=\ud_A(B)$ and $[L:K]<\mu_A(B)$. It
suffices to take, as in the next example, an extension of number
fields $K\subset L$ which does not admit a relative integral basis
(see also Final Comments~\ref{finalcomments}).

\begin{example}\label{notuptodown}
Let $K=\mbq(\sqrt{-14})$ and $L=K(\sqrt{-7})$. Let $A$ be the integral
closure of $\mbz$ in $K$ and let $B$ be the integral closure of $\mbz$
in $L$. Then $A\subset B$ is a finite integral extension, $A$ is
integrally closed, $K\subset L$ is simple, but $A\subset B$ is not
free (see \cite{ms}).  Hence, by Corollary~\ref{rephrasing},
$\ud_A(B)<\mu_A(B)$.  Note that $\ud_K(L)=\ud_A(B)=[L:K]=2$. Moreover,
it is well-known that $A=\mbz[\sqrt{-14}]$ and
$B=\mbz[(1+\sqrt{-7})/2,\sqrt{2}]$ (see, e.g.,
\cite[Theorem~9.5]{janusz}). An easy calculation shows that $B=\langle
1,(1+\sqrt{-7})/2,\sqrt{2}\rangle_A$. Thus $\mu_A(B)=3$.
\end{example}

Now, we return to the sub-multiplicativity question.

\begin{theorem}\label{domains}
Let $A\subset B$ and $B\subset C$ be two integral ring extensions.
Then, for every $\alpha\in C$,
\begin{eqnarray*}
\id_A(\alpha)\leq\mu_A(\overline{A})\ud_A(B)\id_B(\alpha).
\end{eqnarray*}
In particular,
\begin{eqnarray*}
\ud_A(C)\leq \mu_A(\overline{A})\ud_A(B)\ud_B(C).
\end{eqnarray*}
\end{theorem}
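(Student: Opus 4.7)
The plan is to factor the bound through two intermediate objects---the integral closure $\overline{A}$ and the fraction field $K$---and then appeal to Lemma~\ref{equivalences} for the global statement. Concretely, I will prove the pointwise bound $\id_A(\alpha)\leq\mu_A(\overline{A})\ud_A(B)\id_B(\alpha)$; once that holds, Lemma~\ref{equivalences} with $\nu=\mu_A(\overline{A})\ud_A(B)$ immediately gives $\ud_A(C)\leq\mu_A(\overline{A})\ud_A(B)\ud_B(C)$. If $\mu_A(\overline{A})=\infty$ both statements are vacuous, so I may tacitly assume finiteness.

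The first step is to insert $\overline{A}$. Since $\alpha$ is integral over $B$ and $B$ is integral over $A$, $\alpha$ is integral over $A$ and hence over $\overline{A}$, so both $A\subset\overline{A}$ and $\overline{A}\subset\overline{A}[\alpha]$ are integral extensions of domains. Theorem~\ref{m-equal-mu} applied to this chain yields
\[
\id_A(\alpha)\leq\mu_A(\overline{A})\,\id_{\overline{A}}(\alpha),
\]
with $\mu_A(\overline{A})$ playing the role of a ``price'' for replacing the base by its integral closure.

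Now that the base is integrally closed I can leave the ring setting. Proposition~\ref{kronecker}, applied to the integral extension $A\subset\overline{A}[\alpha]$, identifies $\id_{\overline{A}}(\alpha)=\id_K(\alpha)$. On the field side the algebraic extensions $K\subset L\subset M$ are available, so Theorem~\ref{sm-afe} delivers $\id_K(\alpha)\leq\ud_K(L)\id_L(\alpha)$. Combining this with $\ud_K(L)\leq\ud_A(B)$ (Proposition~\ref{first-prop}(e)) and the trivial $\id_L(\alpha)\leq\id_B(\alpha)$ (any monic equation for $\alpha$ over $B$ is a monic equation over $L$) gives the desired inequality by chaining.

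The main conceptual hurdle is recognising \emph{which} intermediate ring to insert. A direct attack on $\id_A(\alpha)$ via the coefficients of a minimal polynomial of $\alpha$ over $B$, in the manner of Corollary~\ref{exponential}, produces the much weaker exponential factor $\ud_A(B)^{\id_B(\alpha)}$; inserting $\overline{A}$ first is what permits the transfer to the purely field-theoretic side, where the stronger \emph{linear} sub-multiplicativity of Theorem~\ref{sm-afe} is available and collapses that exponential into a single factor of $\ud_A(B)$.
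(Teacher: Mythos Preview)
Your proof is correct and follows essentially the same route as the paper's: insert $\overline{A}$ via Theorem~\ref{m-equal-mu}, identify $\id_{\overline{A}}(\alpha)=\id_K(\alpha)$ via Proposition~\ref{kronecker}, bound $\id_K(\alpha)$ using Theorem~\ref{sm-afe} and Proposition~\ref{first-prop}, and conclude with Lemma~\ref{equivalences}. The only additions are your explicit handling of the $\mu_A(\overline{A})=\infty$ case and the closing commentary, neither of which changes the argument.
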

\begin{proof}
Let $\alpha\in C$. Consider the integral extensions $A\subset
\overline{A}$ and $\overline{A}\subset \overline{A}[C]$, where
$\overline{A}[C]$ stands for the $\overline{A}$-algebra generated by
the elements of $C$. By Theorem~\ref{m-equal-mu}, $\id_A(\alpha)\leq
\mu_A(\overline{A})\id_{\overline{A}}(\alpha)$. But, by
Proposition~\ref{kronecker}, $\id_{\overline{A}}(\alpha)=\id_K(\alpha)$. On
the other hand, applying Theorem~\ref{sm-afe} and
Proposition~\ref{first-prop}, we have
\begin{eqnarray*}
\id_K(\alpha)\leq \ud_K(L)\id_L(\alpha)\leq \ud_A(B)\id_B(\alpha).
\end{eqnarray*}
Hence, $\id_A(\alpha)\leq \mu_A(\overline{A})\ud_A(B)\id_B(\alpha)$.
By Lemma~\ref{equivalences}, we are done.
\end{proof}

\begin{remark}
The ring $\overline{A}[C]$ appears in the proof of
Theorem~\ref{domains}. A natural question is whether this ring is the
tensor product $\overline{A}\otimes_AC$. Observe that indeed there is
a natural surjective morphism of rings $\overline{A}\otimes_AC\to
\overline{A}[C]$. However this morphism is not necessarily an
isomorphism. For instance, take $A=k[t^2,t^3]$ and $C=\overline{A}$,
where $\overline{A}=k[t]$. So $\overline{A}[C]=\overline{A}=k[t]$. One
can check that $\overline{A}\otimes_AC$ is not a domain. Indeed, write
$\overline{A}=A[X]/I$, with $I=(X^2-t^2,
t^2X-t^3,t^3X-t^4)$. Therefore $\overline{A}\otimes_AC=A[X,Y]/H$,
where $H=(X^2-t^2, t^2X-t^3,t^3X-t^4,Y^2-t^2,
t^2Y-t^3,t^3Y-t^4)$. Note that $X^2-Y^2$ is in $H$, but neither $X-Y$
nor $X+Y$ are in $H$. Hence $\overline{A}\otimes_AC$ is not a domain
and cannot be isomorphic to $\overline{A}[C]=k[T]$, which is a domain.
\end{remark}

As an immediate consequence of Theorem~\ref{domains}, we get the
sub-multiplicativity of the integral degree with respect to $A\subset
B$ when $A$ is integrally closed.

\begin{corollary}\label{sm-icd}
Let $A\subset B$ and $B\subset C$ be two integral ring extensions.
Suppose that $A$ is an integrally closed domain. Then
\begin{eqnarray*}
\ud_A(C)\leq \ud_A(B)\ud_B(C).
\end{eqnarray*}
\end{corollary}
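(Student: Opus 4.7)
The plan is very short, since this is an immediate specialization of Theorem~\ref{domains}. I would simply invoke that theorem, which provides the inequality
\begin{eqnarray*}
\ud_A(C)\leq \mu_A(\overline{A})\ud_A(B)\ud_B(C)
\end{eqnarray*}
for \emph{any} integral ring extensions $A\subset B\subset C$.

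Under the hypothesis that $A$ is integrally closed, $\overline{A}=A$, so $A$ is generated as an $A$-module by the single element $1$; that is, $\mu_A(\overline{A})=\mu_A(A)=1$. Substituting this into the inequality above yields $\ud_A(C)\leq \ud_A(B)\ud_B(C)$, as required.

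There is no real obstacle: the only ``content'' is the observation that integral closedness of $A$ collapses the factor $\mu_A(\overline{A})$ to $1$. All the work has already been done in Theorem~\ref{domains}, which itself rested on Theorem~\ref{m-equal-mu} (applied to $A\subset\overline{A}$), Proposition~\ref{kronecker} (to replace $\id_{\overline{A}}$ by $\id_K$), Theorem~\ref{sm-afe} (sub-multiplicativity at the level of fields of fractions), and Lemma~\ref{equivalences} (to pass from elementwise integral degrees to $\ud$ of the whole extension).
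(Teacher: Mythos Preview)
Your proposal is correct and matches the paper's approach exactly: the paper states this corollary as an immediate consequence of Theorem~\ref{domains}, and your observation that $\mu_A(\overline{A})=1$ when $A$ is integrally closed is precisely the intended specialization.
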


However, in the non-integrally closed case, this formula may fail
already for noetherian domains of dimension $1$, as shown below. To
see this, we take advantage of an example due to Dedekind of a
non-monogenic number field $L$. Concretely, we consider $B$ as the
ring of integers of $L$ and find $A$ and $C$ such that
$\ud_A(C)>\ud_A(B)\ud_B(C)$.

\begin{example}\label{no-sm}
Let $\gamma_1$ be a root of the irreducible polynomial
$T^3-T^2-2T-8\in \mbq [T]$. Let $L=\mbq(\gamma_1)$. Let $B$ be the
integral closure of $\mbz$ in $L$ (i.e., the ring of integers of
$L$). Then
\begin{itemize}
\item[$(a)$] $B$ is a free $\mbz$-module with basis
  $\{1,\gamma_1,\gamma_2\}$, where $\gamma_2=(\gamma_1^2+\gamma_1)/2$.
\item[$(b)$]
  $\ud_{\mathbb{Q}}(L)=\ud_{\mathbb{Z}}(B)=\mu_{\mathbb{Z}}(B)=3$ and
  the extension $\mbz\subset B$ is not simple ($L$ is non-monogenic).
\end{itemize}
Let $A=\langle 1,2\gamma_1,2\gamma_2 \rangle_{\mbz}=\{
a+b\gamma_1+c\gamma_2 \in B\mid a,b,c\in\mbz, b\equiv c\equiv 0
\pmod{2} \}$. Then
\begin{itemize}
\item[$(c)$] $A$ is a free $\mbz$-module and an integral domain with
  field of fractions $K=Q(A)=L$.
\item[$(d)$] $B$ is the integral closure of $A$ in $L$, $\ud_A(B)=2$
  and $\mu_A(B)=3$.
\end{itemize}
Let $C=B[\alpha]$, where $\alpha$ is a root of
$p(T)=T^2+\gamma_1T+(1+\gamma_2)\in B[T]$. Then
\begin{itemize}
\item[$(e)$] $B\subset C$ is an integral extension with $\ud_B(C)=2$
  and $\ud_A(C)=6$. 
\end{itemize}
In particular,
$\ud_A(B)\ud_B(C)<\ud_A(C)<\ud_A(\overline{A})\ud_A(B)\ud_B(C)$.
\end{example}
\begin{proof}
By Corollary~\ref{dedekind}, $\mbz\subset B$ is free and
$\ud_{\mathbb{Q}}(L)=\ud_{\mathbb{Z}}(B)=\mu_{\mathbb{Z}}(B)$. Moreover,
since $\gamma_1\in B$ with $\id_{\mathbb{Z}}(\gamma_1)=3$, then
$\ud_{\mathbb{Z}}(B)\geq 3$. The proof that $\{1,\gamma_1,\gamma_2\}$
is a free $\mbz$-basis of $B$ and that $\mbz\subset B$ is not simple
is due to Dedekind (see, e.g., \cite[p.~64]{narkiewicz}).  This proves
$(a)$ and $(b)$.

Note that, from the equalities
\begin{eqnarray*}
\gamma_1^2=-\gamma_1+2\gamma_2\mbox{ ,
}\gamma_2^2=6+2\gamma_1+3\gamma_2\mbox{ and }\gamma_1
\gamma_2=4+2\gamma_2,
\end{eqnarray*}
 the product in $B$ can be immediately computed in terms of its
 $\mbz$-basis $\{1,\gamma_1,\gamma_2\}$.

Clearly $\{1,2\gamma_1,2\gamma_2\}$ are $\mbz$-linearly independent.
One can easily check that $A$ is a ring and that $x^2+x\in A$, for
every $x\in B$. Hence, $A\subset B$ is an integral extension with
$\ud_A(B)=2$. Moreover, the field of fractions of $A$ is $K=Q(A)=L$,
and the integral closure of $A$ in $K$ is $B$. Observe that
$\mu_A(B)\leq \mu_{\mathbb{Z}}(B)=3$. Below we will see that
$\mu_A(B)=3$.

Now let us prove that $\ud_B(C)=2$. One readily checks that the
discriminant $\Delta=-\gamma_1^2-2\gamma_1-4$ of $p(T)$ has norm
$N_{L/\mathbb{Q}}(\Delta)=-16$. Since $-16$ is not a square in $\mbq$,
then $\Delta$ cannot be a square in $L$. Therefore $p(T)$ is
irreducible over $L$ and $\ud_B(C)=2$.

Let $h(T)\in A[T]$ be a minimal degree polynomial of $\alpha$ over
$A$. Since $p(T)$ is the irreducible polynomial of $\alpha$ over $L$,
it follows that $h(T)=p(T)q(T)$, for some $q(T)\in L[T]$. Moreover,
$q(T)$ must necessarily belong to $B[T]$, because $B$ is integrally
closed in $L$ (see, e.g., \cite[Chapter~V, \S~1.3,
  Proposition~11]{bourbaki-ac}). Therefore, $q(T)$ is a monic
polynomial in $B[T]$ such that $p(T)q(T)\in A[T]$. An easy computation
shows that this implies that $\deg(q(T))\geq 4$ (note that the
existence of such a polynomial $q(T)=T^n+b_{1}T^{n-1}+\cdots
+b_{n-1}T+b_n$ is equivalent to the solvability in $\mbz$ modulo $2$
of a certain system of linear equations with coefficients in $\mbz$,
in the unknowns $a_{ij}\in\mbz$, where
$b_i=a_{i,1}+a_{i,2}\gamma_1+a_{i,3}\gamma_2$).  Thus,
$\id_A(\alpha)=\deg(h(T))\geq 6$. By Theorem~\ref{m-equal-mu},
\begin{eqnarray*}
6\leq\id_A(\alpha)\leq\ud_A(C)\leq \mu_A(B)\ud_B(C)\leq 6.
\end{eqnarray*}
Hence $\ud_A(C)=6$ and $\mu_A(B)=3$.
\end{proof}

\begin{remark}\label{notpossiblerank2}
It is not possible to construct a similar example with $B$ having rank
$2$ over $\mbz$, because $\ud_{A}(B)\leq \mu_A(B)\leq
\rank_{\mathbb{Z}}(B)=2$ implies $\ud_A(B)=\mu_A(B)$ and then, by
Corollary~\ref{sm-mid}, $\ud_A(B)\leq \ud_A(B)\ud_B(C)$.
\end{remark}

\section{Upper-semicontinuity}\label{semi}

Recall that $A\subset B$ is an integral ring extension of integral
domains, and $K=Q(A)$ and $L=Q(B)$ are their fields of fractions. Let
$\ud:\Spec(A)\to\mbn$ be defined by
$\ud(\mfp)=\ud_{A_{\mathfrak{p}}}(B_{\mathfrak{p}})$. In this section
we study the upper-semicontinuity of $\ud$, that is, whether or not,
\begin{eqnarray*}
\ud^{-1}([n,+\infty)) =\{\mfp\in\Spec(A)\mid \ud(\mfp)\geq n\}
\end{eqnarray*}
is a closed set for every $n\geq 1$. There are two cases in which
upper-semicontinuity follows easily from our previous results.

\begin{proposition}\label{semi-two}
Let $A\subset B$ be an integral ring extension. Then
$\ud:\Spec(A)\to\mbn$, defined by
$\ud(\mfp)=\ud_{A_{\mathfrak{p}}}(B_{\mathfrak{p}})$, is
upper-semicontinuous in any of the following cases:
\begin{itemize}
\item[$(a)$] $A\subset B$ is simple; 
\item[$(b)$] $A\subset B$ has minimal integral degree (e.g., $A\subset
  B$ is projective finite and $K\subset L$ is simple; or $A$ is
  integrally closed).
\end{itemize}
\end{proposition}
\begin{proof}
If $A\subset B=A[b]$ is simple, then $A_{\mathfrak{p}}\subset
B_{\mathfrak{p}}=A_{\mathfrak{p}}[b/1]$ is simple too, for every
$\mfp\in\Spec(A)$.  By Proposition~\ref{first-prop}, it follows that
$\ud(\mfp)=\ud_{A_\mathfrak{p}}(B_{\mathfrak{p}})=
\mu_{A_\mathfrak{p}}(B_{\mathfrak{p}})$. But the minimal number of
generators is known to be an upper-semicontinuous function (see, e.g.,
\cite[Chapter~IV, \S~2, Corollary~2.6]{kunz}). This shows case $(a)$.
By Proposition~\ref{first-prop},~$(e)$, $\ud_K(L)\leq
\ud_{A_\mathfrak{p}}(B_{\mathfrak{p}})\leq\ud_A(B)$, for every
$\mfp\in\Spec(A)$. In case $(b)$, that is, if $\ud_K(L)=\ud_A(B)$,
then $\ud(\mfp)= \ud_{A_\mathfrak{p}}(B_{\mathfrak{p}})=\ud_{K}(L)$ is
constant, and thus upper-semicontinuous. This happens, for instance,
if $A\subset B$ is projective finite and $K\subset L$ is simple or $A$
is integrally closed, then $\ud_K(L)=\ud_A(B)$ (see
Theorem~\ref{proj-klsimple} or Proposition~\ref{kronecker}).
\end{proof}

A possible way to weaken the integrally closed hypothesis is to shrink
the conductor $\mcc=(A:\overline{A})$ of $A$ in its integral closure
$\overline{A}$. A first thought would be to suppose that $\mcc$ is of
maximal height. However, with some extra assumptions on $A$, e.g., $A$
local Cohen-Macaulay, analytically unramified and $A$ not integrally
closed, one can prove that the conductor must have height $1$ (see,
e.g., \cite[Exercise~12.6]{hs}). In this sense, it seems appropriate
to start by considering when $\dim A=1$.

\begin{theorem}\label{semi-nagata}
Let $A\subset B$ be an integral ring extension. Suppose that $A$ is a
noetherian domain of dimension $1$ and with finite integral closure
(e.g., $A$ is a Nagata ring). Then $\ud:\Spec(A)\to\mbn$, defined by
$\ud(\mfp)=\ud_{A_{\mathfrak{p}}}(B_{\mathfrak{p}})$, is
upper-semicontinuous.
\end{theorem}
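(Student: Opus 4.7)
The plan is to exploit the simple topology of a one-dimensional noetherian $\Spec(A)$ together with the finiteness of the conductor locus. Since a closed subset of $\Spec(A)$ (with $\dim A=1$) is either all of $\Spec(A)$ or a finite set of maximal ideals, it suffices to show that, for each $n\geq 1$, the set $\{\mfp\in\Spec(A)\mid\ud(\mfp)\geq n\}$ is of one of these two forms.

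First, by Proposition~\ref{first-prop}$(e)$, $\ud_K(L)=\ud((0))\leq\ud(\mfp)$ for every $\mfp\in\Spec(A)$, so whenever $n\leq\ud_K(L)$ the set above is the whole $\Spec(A)$, which is closed. We may therefore assume that $\ud_K(L)<\infty$ (otherwise $\ud$ is constantly infinite and the function is trivially upper-semicontinuous) and restrict attention to integers $n>\ud_K(L)$. For such $n$ the generic point $(0)$ is excluded, so the set in question is contained in $\Max(A)$, and it suffices to show it is \emph{finite}.

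The main tool will be the conductor $\mcc=(A:\overline{A})=\{a\in A\mid a\overline{A}\subseteq A\}=\mathrm{Ann}_A(\overline{A}/A)$. Since $\overline{A}$ is a finite $A$-module by hypothesis, $\mcc\neq(0)$; and since $A$ is noetherian of dimension $1$, the quotient $A/\mcc$ is a zero-dimensional noetherian ring, hence artinian, so $V(\mcc)$ is a finite subset of $\Max(A)$. For $\mfm\in\Max(A)\setminus V(\mcc)$ one has $\mcc\not\subseteq\mfm$, so $\mcc_{\mathfrak{m}}=A_{\mathfrak{m}}$; since $\overline{A}/A$ is a finitely generated $A$-module annihilated by $\mcc$, it follows that $(\overline{A}/A)_{\mathfrak{m}}=0$, i.e., $\overline{A}_{\mathfrak{m}}=A_{\mathfrak{m}}$. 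In other words, $A_{\mathfrak{m}}$ is integrally closed. Because $Q(A_{\mathfrak{m}})=K$ and $Q(B_{\mathfrak{m}})=L$ (see Proposition~\ref{first-prop}$(d)$), Proposition~\ref{kronecker} then yields $\ud(\mfm)=\ud_{A_{\mathfrak{m}}}(B_{\mathfrak{m}})=\ud_K(L)$.

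Combining these observations, for every $n>\ud_K(L)$,
\begin{eqnarray*}
\{\mfp\in\Spec(A)\mid\ud(\mfp)\geq n\}\;\subseteq\; V(\mcc),
\end{eqnarray*}
a finite set of maximal ideals, hence closed in $\Spec(A)$. This establishes the upper-semicontinuity of $\ud$. The only technical point is the localization identity $\overline{A}_{\mathfrak{m}}=A_{\mathfrak{m}}$ outside $V(\mcc)$, which is a routine consequence of $\overline{A}/A$ being finitely generated and annihilated by $\mcc$; all the real content is the reduction to the integrally closed case at stalks where the conductor is invisible.
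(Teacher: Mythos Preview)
Your proof is correct and follows essentially the same route as the paper: use the conductor $\mcc=(A:\overline{A})$ to isolate a finite set $V(\mcc)$ of maximal ideals, show via Proposition~\ref{kronecker} that $\ud(\mfp)=\ud_K(L)$ for $\mfp\notin V(\mcc)$, and conclude that $\ud^{-1}([n,+\infty))$ is either all of $\Spec(A)$ or a finite subset of $\Max(A)$. The only cosmetic difference is that the paper first disposes of the integrally closed case by quoting Proposition~\ref{semi-three}, whereas you handle it uniformly (since then $\mcc=A$ and $V(\mcc)=\emptyset$); you also make the infinite-$\ud_K(L)$ case explicit, which the paper leaves implicit.
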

\begin{proof}
If $A$ is integrally closed, the result follows from
Proposition~\ref{semi-two}. Thus we can suppose that $A$ is not
integrally closed. Since $\overline{A}$ is finitely generated as an
$A$-module, then $\mcc=(A:\overline{A})\neq 0$. Since $A$ is a one
dimensional domain, $\mcc$ has height $1$ and any prime ideal $\mfp$
containing $\mcc$ must be minimal over it. Therefore, the closed set
$V(\mcc)$ coincides with the set of minimal primes over $\mcc$, so it
is finite. Note that, for any $\mfp\in\Spec(A)$, $\ud(\mfp)=
\ud_{A_\mathfrak{p}}(B_{\mathfrak{p}})\geq \ud_{K}(L)$ (see
Proposition~\ref{first-prop}). Moreover, if $\mfp\not\in V(\mcc)$,
then $A_{\mathfrak{p}}=\overline{A_{\mathfrak{p}}}$ and
$\ud(\mfp)=\ud_K(L)$ (see Proposition~\ref{kronecker}).  Now, take
$n\geq 1$. If $n>\ud_K(L)$, then $\{\mfp\in\Spec(A)\mid \ud(\mfp)\geq
n\}\subseteq V(\mcc)$ is a finite set, hence a closed set. If $n\leq
\ud_K(L)$, then $\{\mfp\in\Spec(A)\mid \ud(\mfp)\geq n\}=\Spec(A)$.
Thus, for every $n\geq 1$, $\ud^{-1}([n,+\infty))$ is a closed set and
  $\ud:\Spec(A)\to\mbn$ is upper-semicontinuous.
\end{proof}

\begin{remark}
Note that the proof of Theorem~\ref{semi-nagata} only uses that
$V(\mcc)$ is a finite set of $\Spec(A)$. For instance, it also holds
if $A$ is a noetherian local domain of dimension $2$ and with finite
integral closure $\overline{A}$. Another example where it would work
would be the following: let $A$ be the coordinate ring of a reduced
and irreducible variety $V$ over a field of characteristic zero. Then
the conductor $\mcc$ contains the Jacobian ideal $J$. Now $J$ defines
the singular locus of $V$, so if we suppose that $V$ has only isolated
singularities, then $J$ is of dimension zero, so $\mcc$ is of
dimension zero also. Hence $V(\mcc)$ is finite (see
\cite[Theorem~4.4.9]{hs} and \cite[Corollary~6.4.1]{vasconcelos}).
\end{remark}

If we skip the condition that $\overline{A}$ be finitely generated,
the result may fail. The following example is inspired by
\cite[Example~1.4]{sv} (see also \cite[Example~6.6.]{gp}).

\begin{example}\label{no-semi}
There exists a noetherian domain $A$ of dimension $1$ with
$\ud_A(\overline{A})=2$, but $\mu_A(\overline{A})=\infty$, and such
that $\ud:\Spec(A)\to\mbn$, defined by
$\ud(\mfp)=\ud_{A_{\mathfrak{p}}}(\overline{A}_{\mathfrak{p}})$, is
not upper-semicontinuous.
\end{example}
\begin{proof}
Let $t_1,t_2,\ldots ,t_n,\ldots $ be infinitely many indeterminates
over a field $k$. Let
\begin{eqnarray*}
R=k[t_1^2,t_1^3,t_2^2,t_2^3,\ldots ]\subset D=k[t_1,t_2,\ldots ].
\end{eqnarray*}
Clearly $\overline{R}=D$. Note that for $f\in D=k[t_1,t_2,\ldots ]$,
$f\in R$ if and only if every monomial $\lambda t_1^{i_1}\cdots
t_r^{i_r}$ of $f$ has each $i_j=0$ or $i_j\geq 2$.

For every $n\geq 1$, let $\mfq_n=(t_n^2,t_n^3)R$, which is a prime
ideal of $R$ of height $1$. Note that for $f\in R$, $f\in \mfq_n$ if
and only if every monomial $\lambda t_1^{i_1}\cdots t_r^{i_r}$ of $f$
has $i_n\geq 2$. It follows that $t_n\not\in R_{\mathfrak{q_n}}$,
because if $t_n=a/b$, $a,b\in R$ and $b\not\in\mfq_n$, then every
monomial of $a=bt_n$ has each $i_j=0$ or $i_j\geq 2$, so has $i_n\geq
2$. Therefore, $t_n$ appears in each monomial of $b$, but since $b\in
R$, the exponent of $t_n$ in each monomial of $b$ must be at least
$2$, so $b\in\mfq_n$, a contradiction.

Now, set $R\subset D_n=k[t_1,\ldots
  ,t_{n-1},t_n^2,t_n^3,t_{n+1},\ldots ]\subset D$ and
$S_n=R\setminus\mfq_n$, a multiplicatively closed subset of
$R$. Clearly $R_{\mathfrak{q_n}}=S_n^{-1}D_n$.

\noindent {\sc Claim}. Let $I$ be an ideal of $R$ such that $I\subseteq
\cup_{n\geq 1}\mfq_n$. Then $I$ is contained in some $\mfq_j$.

\noindent If $I$ is contained in a finite union of $\mfq_i$, using the
ordinary prime avoidance lemma, we are done. Suppose that $I$ is not
contained in any finite union of $\mfq_i$ and let us reach a
contradiction. Take $f\in I$, $f\neq 0$. Then $f\in k[t_1,\ldots
  ,t_n]$ for some $n\geq 1$ and $f$ is in a finite number of $\mfq_i$,
corresponding to the variables $t_i$ that appear in every single
monomial of $f$. We can suppose that $f\in\mfq_1\cap \ldots\cap
\mfq_r$, for some $1\leq r\leq n$, and $f\not\in\mfq_i$, for
$i>r$. Since $I\not\subset \mfq_1\cup\ldots \cup \mfq_r$, there exists
$g\in I$ such that $g\not\in \mfq_1\cup\ldots \cup \mfq_r$.  Let
$h=t_s^2g\in I$, where $s>n$, so that $f$ and $h$ have no common
monomials. Since $\mfq_i$ are prime, then $h=t_s^2g\not\in
\mfq_1\cup\ldots \cup\mfq_r$. Since $f,h\in I\subseteq \cup_{n\geq
  1}\mfq_n$, then $f+h\in \mfq_m$, for some $m\geq 1$. But since $f\in
\mfq_1\cap \ldots\cap \mfq_r$ and $h\not\in \mfq_1\cup\ldots
\cup\mfq_r$, then necessarily $m>r$. Thus $f+h\in\mfq_m$, where
$m>r$. But since $f$ and $h$ have no common monomials, this implies
that every monomial of $f$ must contain $t_m^2$, so $f\in\mfq_m$, a
contradiction. Hence $I\subseteq \mfq_j$ for some $j$ and the {\sc
  Claim} is proved. (An alternative proof would follow from
\cite[Proposition~2.5]{ShV}, provided that $k$ is uncountable.)

Let $S=R\setminus \cup_{n\geq 1}\mfq_n$, a multiplicatively closed
subset of $R$. Let $A=S^{-1}R$ and $\mfp_n=S^{-1}\mfq_n$. If $Q$ is a
prime ideal of $R$ such that $Q\subseteq \cup_{n\geq 1}\mfq_n$, then,
by the {\sc Claim} above, $Q\subseteq \mfq_j$, for some $n\geq 1$. In
particular, $\Spec(A)=\{(0)\}\cup \{\mfp_n\mid n\geq 1\}$, where each
$\mfp_n$ is finitely generated. Therefore $A$ is a one dimensional
noetherian domain.

For every $n\geq 1$,
$A_{\mathfrak{p}_n}=(S^{-1}R)_{S^{-1}\mathfrak{q}_n}=
R_{\mathfrak{q}_n}=S^{-1}_{n}D_n$. Moreover, $t_n=t_n^3/t_n^2$ is in
the field of fractions of $A_{\mathfrak{p}_n}$ and $t_n^2\in
A_{\mathfrak{p}_n}$, i.e. $t_n$ is integral over
$A_{\mathfrak{p}_n}$. Thus
\begin{eqnarray*}
A_{\mathfrak{p}_n}[t_n]= (S_n^{-1}D_n)[t_n]=S_n^{-1}D\espai\mbox{ and
}\espai
\overline{A_{\mathfrak{p}_n}}=\overline{A_{\mathfrak{p}_n}[t_n]}=
\overline{S_n^{-1}D}= S_n^{-1}(\overline{D})=S_n^{-1}D.
\end{eqnarray*}
Hence $\overline{A_{\mathfrak{p}_n}}=A_{\mathfrak{p}_n}[t_n]$. Recall
that $t_n\not\in R_{\mathfrak{q}_n}=A_{\mathfrak{p}_n}$ and
$\ud_{A_{\mathfrak{p}_n}}(t_n)\leq 2$. By Proposition~\ref{first-prop},
$\ud_{A_{\mathfrak{p}_n}}( \overline{A_{\mathfrak{p}_n}})=
\ud_{A_{\mathfrak{p}_n}}(A_{\mathfrak{p}_n}[t_n])=2$.

Consider the integral extension $A\subset \overline{A}$ and
$\ud:\Spec(A)\to\mbn$, defined by
$\ud(\mfp)=\ud_{A_{\mathfrak{p}}}(\overline{A}_{\mathfrak{p}})=
\ud_{A_{\mathfrak{p}}}(\overline{A_{\mathfrak{p}}})$. We have just
shown that, for every $n\geq 1$,
$\ud(\mfp_n)=\ud_{A_{\mathfrak{p}_n}}(
\overline{A_{\mathfrak{p}_n}})=2$. On the other hand,
$\ud((0))=\ud_{Q(A)}(Q(\overline{A}))=1$ because
$Q(A)=Q(\overline{A})$.  Therefore,
$\ud^{-1}([2,+\infty))=\Spec(A)\setminus\{(0)\}$, which is not a
  closed set. Indeed, suppose that $\Spec(A)\setminus\{(0)\}=V(I)$,
  for some non-zero ideal $I$. Since $A$ is a one dimensional
  noetherian domain, $I$ has height $1$ and $V(I)$ is the finite set
  of associated primes to $I$. However, $\Spec(A)\setminus
  \{(0)\}=\Max(A)$, which is infinite, a contradiction. So
  $\ud:\Spec(A)\to\mbn$ is not upper-semicontinuous.
\end{proof}

\begin{remark}\label{dim1-notsm}
Contrary to the upper-semicontinuity, sub-multiplicativity does not
work for one dimensional noetherian domains with finite integral
closure. See Example~\ref{no-sm}, where $A$ was a noetherian domain of
dimension $1$ and with finite integral closure.
\end{remark}

\begin{finalcomments}\label{finalcomments}
We finish the paper by mentioning some points that we think would be
worth clarifying. To simplify, suppose that $A\subset B$ and $B\subset
C$ are two finite ring extensions, where, as always, $A$ and $B$ are
two integral domains, and $K$ and $L$ are their fields of fractions,
respectively.
\begin{itemize}
\item[$(1)$] We have shown that $A\subset B$ of maximal integral
  degree implies sub-multiplicativity
  (cf. Corollary~\ref{sm-mid}). Does the same work for minimal
  integral degreee?
\item[$(2)$] We have shown that $A\subset B$ of minimal integral
  degree implies upper-semicontinuity
  (cf. Proposition~\ref{semi-two}). Does the same work for maximal
  integral degreee?
\item[$(3)$] We have shown that $A\subset B$ free and $K\subset L$
  simple implies $\ud_K(L)=\ud_A(B)$ (Corollary~\ref{free-klsimple}).
  Can we omit the hypothesis $K\subset L$ simple? In other words, does
  $[L:K]=\mu_A(B)$ imply $\ud_K(L)=\ud_A(B)$?  If so, we would have a
  ``down-to-up rigidity'' in the diagram of
  Notation~\ref{diagram}. Note that the ``up-to-down rigidity'' is not
  true (see, e.g., Example~\ref{notuptodown}).
\item[$(4)$] Does the condition $\ud_A(B)=\mu_A(B)$ localize? In
  particular, does $\ud_A(B)=\mu_A(B)$ imply $\ud_K(L)=[L:K]$? That
  would imply a ``right-to-left rigidity'' in the diagram of
  Notation~\ref{diagram}. If $A$ is integrally closed, the answer is
  affirmative. Note that Examples~\ref{d<mu} and \ref{no-sm} affirm
  that the ``left-to-right rigidity'' is not true.
\item[$(5)$] It would be interesting to study the
  sub-multiplicativity and upper-semicontinuity properties for the
  specific case of affine domains $A$ and $B$.
\item[$(6)$] Can one replace $\mu_A(\overline{A})$ by
  $\ud_A(\overline{A})$ in the inequality $\ud_A(C)\leq
  \mu_A(\overline{A})\ud_A(B)\ud_B(C)$ of Theorem~\ref{domains}?
\item[$(7)$] Is the integral degree upper-semicontinuous for Nagata
  rings of dimension greater than $1$?
\item[$(8)$] Is there any clear relationship between $\ud_A(B)$ and
  the pair of numbers $\ud_{A/\mathfrak{p}}(B/\mfp B)$ and
  $\ud_{A_{\mathfrak{p}}}(B_{\mathfrak{p}})$? An affirmative answer
  could be useful in recursive arguments.
\item[$(9)$] Upper-semicontinuity does not imply
  sub-multiplicativity. We wonder to what extent sub-multiplicativity
  could imply upper-semicontinuity.
\end{itemize}
\end{finalcomments}

\section*{Acknowledgement} 
The authors were unaware that references \cite{jacobson-book},
\cite{kaplansky}, \cite{kurosch}, \cite{levitzki} and \cite{voight}
treated some aspects of the notion of integral degree, though in a
different framework. They want to thank the referee for pointing them
out. They also would like to thank the referee for his/her comments.
The third author was partially supported by grant 2014 SGR-634 and
grant MTM2015-69135-P. The fourth author was partially supported by
grant 2014 SGR-634 and grant MTM2015-66180-R.

{\small

}

\vspace*{1cm}

{\footnotesize\sc

\noindent Departament d'\`Algebra i Geometria, Universitat de
Barcelona. \newline Gran Via de les Corts Catalanes 585, E-08007 Barcelona.
{\em giral@ub.edu}

\vspace{0.3cm}

\noindent Maxwell Institute for Mathematical Sciences,
School of Mathematics, University of Edinburgh. \newline
EH9 3DF, Edinburgh.

\vspace{0.3cm}

\noindent Departament de Matem\`atiques, Universitat Polit\`ecnica de
Catalunya. \newline Diagonal 647, ETSEIB, E-08028 Barcelona.  {\em
  francesc.planas@upc.edu}

\vspace{0.3cm}

\noindent Departament de Matem\`atiques, Universitat Polit\`ecnica de
Catalunya. \newline Diagonal 647, ETSEIB, E-08028 Barcelona.  {\em
  bernat.plans@upc.edu}

}
\end{document}